\DeclareMathOperator{\supp}{ \text{supp}}
\newtheorem{defin}{Definition}[section]
\newtheorem{prop}[defin]{Proposition}
\newtheorem{lem}[defin]{Lemma}
\newtheorem{teo}[defin]{Theorem}
\newtheorem{question}[defin]{Question}
\begin{document}

\title[Countably compact groups from a selective ultrafilter]{Countably compact group topologies on the free Abelian group of size continuum (and a Wallace semigroup) from a selective ultrafilter}

\author[A. C. Boero]{Ana Carolina Boero*}
\thanks{*Corresponding author}
\address{Centro de Matem\'atica, Computa\c{c}\~ao e Cogni\c{c}\~ao (CMCC), Universidade Federal do ABC (UFABC), Rua Aboli\c{c}\~ao, S/N, 09210-180, Santo Andr\'e, SP - Brasil}
\email{ana.boero@ufabc.edu.br}

\author[I. Castro-Pereira]{Irene Castro Pereira}
\address{Instituto de Ci\^encias Exatas e Naturais, Universidade Federal do Par\'a, Rua Augusto Corrêa 01, 66075-110, Bel\'em, PA - Brazil}
\email{irenecastro@ufpa.br}

\author[A. H. Tomita]{Artur Hideyuki Tomita}
\address{Departamento de Matem\'atica, Instituto de Matem\'atica e Estat\'istica, Universidade de S\~ao Paulo, Rua do Mat\~ao, 1010, 05508-090, S\~ao Paulo, SP - Brazil}
\email{tomita@ime.usp.br}

\subjclass[2010]{Primary 54H11, 22A05; Secondary 54A35, 54G20.}

\date{}

\keywords{Topological group, countable compactness, selective ultrafilter, free Abelian group, Wallace's problem}

\maketitle

\begin{center}
  \emph{To Ofelia T. Alas on the occasion of her 75th birthday.}
\end{center}

\begin{abstract}
We prove that the existence of a selective ultrafilter implies the existence of a countably compact Hausdorff group topology on the free Abelian group of size continuum. As a consequence, we show that the existence of a selective ultrafilter implies the existence of a Wallace semigroup (i.e., a countably compact both-sided cancellative topological semigroup which is not a topological group).
\end{abstract}

\section{Introduction}

\subsection{Some history}

L. Fuchs showed that a non-trivial free Abelian group does not admit a compact Hausdorff group topology. Tkachenko \cite{tkachenko} showed in 1990 that the free Abelian group generated by $\mathfrak{c}$ elements can be endowed with a countably compact Hausdorff group topology under CH. In 1998, Tomita \cite{tomita2} obtained such a group topology from $\textrm{MA}(\sigma\mathrm{-centered})$. Koszmider, Tomita and Watson \cite{koszmider&tomita&watson} weakened the necessity of some form of Martin's Axiom to Martin's Axiom for countable posets ($\textrm{MA}_{\textrm{countable}}$). In 2007, Madariaga-Garcia and Tomita \cite{madariaga-garcia&tomita} established the same result assuming the existence of $\mathfrak{c}$ many
pairwise incomparable selective ultrafilters. In this article, we show that the existence of one selective ultrafilter implies the existence of a countably compact Hausdorff group topology on the free Abelian group of size continuum, answering Question 4.5 of \cite{madariaga-garcia&tomita}.

In 1952, Numakura \cite{numakura} showed that every compact both-sided cancellative topological semigroup is a topological group. Three years later, Wallace \cite{wallace} asked whether every countably compact both-sided cancellative topological semigroup is a topological group and this question remains open in ZFC.

In 1996, Robbie and Svetlichny \cite{robbie&svetlichny} answered Wallace's question in the negative under CH.  A counterexample to Wallace's question has been then called a \emph{Wallace semigroup}.

It was shown in \cite{tomita} that under  $\textrm{MA}_{\textrm{countable}}$, there exists

(*) $x \in {\mathbb T}^{\mathfrak c}$ such that the semigroup generated by $x$ and $\{ y \in  {\mathbb T}^{\mathfrak c}:\, \supp y \mbox{ is bounded in } {\mathfrak c}\}$ is a Wallace semigroup.

\smallskip

It is worth noting that in \cite{madariaga-garcia&tomita} the authors showed that
it is consistent that there are selective ultrafilters and an $x$ as in (*)
does not exist. The main example in \cite{madariaga-garcia&tomita} yields a Wallace semigroup from the existence of ${\mathfrak c}$ selective ultrafilters.
\smallskip

 We show in this work that the existence of a selective ultrafilter implies the existence of a Wallace semigroup.

\smallskip

We will use stacks that were introduced in \cite{tomita2015}. However, for that construction each finite sequence was associated to a stack  and an incomparable selective ultrafilter. Here, we will use the same ultrafilter for ${\mathfrak c}$ many sequences and we will need some new ways to produce the homomorphism that preserve the ${\mathcal U}$-limits.

\subsection{Basic results, notation and terminology}

In what follows, all group topologies are assumed to be Hausdorff. We recall that a topological space $X$ is \emph{countably compact} if every infinite subset of $X$ has an accumulation point.

The following definition was introduced in \cite{bernstein} and is closely related to countable compactness.

\begin{defin}\label{defin_p-limit}
Let ${\mathcal U}$ be a free ultrafilter on $\omega$ and let $(x_n : n \in \omega)$ be a sequence in a topological space $X$. We say that $x \in X$ is a \emph{${\mathcal U}$-limit point} of $(x_n : n \in \omega)$ if, for every neighborhood $U$ of $x$, $\{n \in \omega : x_n \in U\} \in {\mathcal U}$. In this case, we write $x = {\mathcal U}-\lim (x_n : n \in \omega)$.
\end{defin}

The set of all free ultrafilters on $\omega$ is denoted by $\omega^{*}$. A topological space $X$ is countably compact if and only if each sequence in $X$ has a ${\mathcal U}$-limit point for some ${\mathcal U}\in \omega^{*}$.

If $A$ is a set then $[A]^{\omega} = \{X \subseteq A : |X| = \omega\}$ and $[A]^{< \omega} = \{X \subseteq A : |X| < \omega\}$.

\smallskip

We say that $\mathcal{A} \subseteq \mathcal{P}(\omega)$ is an \emph{almost disjoint family} if every element of $\mathcal{A}$ is an infinite subset of $\omega$ and the intersection of two distinct elements of $\mathcal{A}$ is a finite set. It is known that there exists an almost disjoint family of size continuum (see \cite{kunen}).

 The unit circle group $\mathbb{T}$ will be identified with the metric group $(\mathbb{R} / \mathbb{Z}, \delta)$ where $\delta$ is given by $\delta(x + \mathbb{Z}, y + \mathbb{Z}) = \min\{|x - y + a| : a \in \mathbb{Z}\}$ for every $x, y \in \mathbb{R}$. Given a proper arc $A$ of $\mathbb{T}$, we will denote by $\delta(A)$ the length of an interval $B$ with $A=\{b+{\mathbb Z}:\, b \in B\}$. The diameter of ${\mathbb T}$ is $1$. If $\delta(A) < \frac{1}{2}$ this corresponds to the diameter in the metric.

\smallskip


Let $X$ be a set and $G$ be a group with neutral element $0$. We denote by $G^{X}$ the product $\prod_{x \in X} G_x$ where $G_x = G$ for every $x \in X$. The \emph{support} of $f \in G^{X}$ is the set $\{x \in X : f(x) \neq 0\}$, which will be designated as $\supp f$. The set $\{f \in G^{X} : |\supp f| < \omega\}$ will be denoted by $G^{(X)}$. Given $x \in X$, we denote by $\chi_x$ the function whose support is $\{x\}$ and $\chi_x (x)=1$.

If $X$ is a subset of $Y$, as an abuse of notation, we may consider $G^{(X)}$  identified with the subgroup $G^{(X)}\times \{0\}^{(Y\setminus X)}$ of $G^{(Y)}$.

\smallskip

 We will call $\phi$ an arc function whenever $\phi$ has a subset $T$ of ${\mathfrak c}$ as domain and whose range is the sets of non-empty  arcs of ${\mathbb T}$, where the arc can be proper or ${\mathbb T}$ itself. Since the first example constructed by Tkachenko in \cite{tkachenko}, arc functions are used to define approximations that will define a homomorphism from ${\mathbb Z}^{(T)}$ into ${\mathbb T}$.

\smallskip

Let ${\mathcal U}$ be a free ultrafilter on $\omega$ and $X$ be a set. We say that the sequences  $f, g \in X^{\omega}$ are \emph{${\mathcal U}$-equivalent} if $\{n \in \omega : f(n) = g(n)\} \in {\mathcal U}$. Given $f \in X^{\omega}$, we denote by $[f]_{\mathcal U}$ the set of all $g \in X^{\omega}$ such that $ f, g \ \hbox{are \, ${\mathcal U}$-equivalent}$.

If $x \in X$, let $\vec{x} \in X^{\omega}$ be such that $\vec{x}(n) = x$ for every $n \in \omega$. The set of all ${\mathcal U}$-equivalence classes in $X^{\omega}$ will be denoted by $X^{\omega} / {\mathcal U}$.

If $R$ is a ring and $G$ is an $R$-module then $G^{\omega} / {\mathcal U}$ has a natural $R$-module structure.

Given $a \in {\mathbb Z}^{({\mathfrak c})}$ denote by $|a|=\max \{ |a(\beta)|:\, \beta \in \supp a\}$ and
$\|a\|= \sum_{\beta \in \supp a } |a(\beta)|$.

\section{Countable compactness and Wallace semigroups from a selective ultrafilter}

\begin{defin}
We say that ${\mathcal U} \in \omega^{*}$ is \emph{selective} if for each partition $\{A_n : n \in \omega\}$ of $\omega$ into non-empty sets, either $A_n \in {\mathcal U}$ for some $n \in \omega$ or, for each $n \in \omega$, there exists $a_n \in A_n$ such that $\{a_n : n \in \omega\} \in {\mathcal U}$.
\end{defin}

Every selective ultrafilter ${\mathcal U}$ is a {\em $P$-point}, that is, if $\{A_n : n \in \omega\} \subseteq {\mathcal U}$ then there exists $A \in {\mathcal U}$ such that $A \setminus A_n$ is finite for each $n \in \omega$.

The existence of selective ultrafilters is independent of ZFC: there exist $2^{\mathfrak{c}}$ many selective ultrafilters under CH or MA and Shelah \cite{shelah} showed via forcing that there exist models of ZFC with no $P$-points (and in particular, with no selective ultrafilters).

\smallskip

We list below some equivalences of selectivity. The last equivalence was used to prove some lemmas in previous works we use here.

\begin{prop}
\cite{comfort&negrepontis} The following are equivalent for ${\mathcal U} \in \omega^{*}$:

\begin{enumerate}[(i)]

  \item ${\mathcal U}$ is selective;

  \item
  for each $f : \omega \to \omega$ there exists $A \in {\mathcal U}$ such that $f |_{A}$ is either constant or injective;

  \item for each partition $\{P_0, P_1\}$ of $[\omega]^{2}$ there exists $D \in {\mathcal U}$ and $j \in 2$ such that $[D]^{2} \subseteq P_j$.

\end{enumerate}
\end{prop}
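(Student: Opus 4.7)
\emph{Proof plan.} The plan is to prove the cycle (iii)$\Rightarrow$(ii)$\Rightarrow$(i)$\Rightarrow$(iii), with the last implication carrying the real content.

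For (iii)$\Rightarrow$(ii), given $f:\omega\to\omega$, color the pair $\{n,m\}$ by whether $f(n)=f(m)$; a homogeneous $D\in\mathcal{U}$ then makes $f|_D$ either constant (color $0$) or injective (color $1$). For (ii)$\Rightarrow$(i), given a partition $\{A_n:n\in\omega\}$, set $f(k)=n$ when $k\in A_n$ and apply (ii): either $f$ is constant on some $A\in\mathcal{U}$, forcing $A\subseteq A_{n_0}$ and hence $A_{n_0}\in\mathcal{U}$, or $f$ is injective on $A$, in which case letting $a_n$ be the unique element of $A\cap A_n$ when it exists (and arbitrary otherwise) gives $A\subseteq\{a_n:n\in\omega\}\in\mathcal{U}$.

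For (i)$\Rightarrow$(iii), given $[\omega]^2=P_0\sqcup P_1$, set $B_n^j=\{m>n:\{n,m\}\in P_j\}$; since $B_n^0\cup B_n^1\in\mathcal{U}$, exactly one of $B_n^0,B_n^1$ belongs to $\mathcal{U}$, and call that index $c(n)$. Applying (ii) to $c:\omega\to 2$, and noting that a map with finite range cannot be injective on an infinite set, one obtains $A\in\mathcal{U}$ on which $c$ is constant; after relabeling assume $c\equiv 0$, so $B_n:=B_n^0\in\mathcal{U}$ for every $n\in A$. The $P$-point property of $\mathcal{U}$ (a direct consequence of selectivity) then produces $D\in\mathcal{U}$ with $D\subseteq A$ and $D\setminus B_n$ finite for every $n\in A$.

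It remains to thin $D$ to a $P_0$-homogeneous subset still in $\mathcal{U}$. Define $g(d)=\max((D\setminus B_d)\cup\{0\})$ for $d\in D$ and $g\equiv 0$ off $D$, and apply (ii) to $g$: there is $D_1\in\mathcal{U}$, $D_1\subseteq D$, on which $g$ is constant or injective. If $g\equiv N$ on $D_1$, take $D'=D_1\cap(N,\infty)\in\mathcal{U}$: any $n<m$ in $D'$ satisfy $m\in D$ and $m>N\geq\max(D\setminus B_n)$, hence $m\in B_n$. If $g$ is injective on $D_1$, one invokes the $Q$-point content of selectivity (applied to a partition of $\omega$ into finite intervals tailored to the running maximum of $g$) together with a further application of (ii) to extract from $D_1$ a subset $D'\in\mathcal{U}$ whose successive elements outrun the $g$-thresholds of earlier ones. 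This injective subcase is the main obstacle: the $P$-point property alone is not enough to produce a homogeneous set in $\mathcal{U}$, and one must combine it with the $Q$-point strength of selectivity and carefully chosen intervals so that the resulting transversal is genuinely $P_0$-homogeneous.
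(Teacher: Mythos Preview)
The paper does not prove this proposition; it is quoted from Comfort--Negrepontis without argument, so there is no ``paper's proof'' to compare against. I will therefore comment on the viability of your plan on its own.

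Your arguments for (iii)$\Rightarrow$(ii) and (ii)$\Rightarrow$(i) are correct and standard. In (i)$\Rightarrow$(iii) there are two issues.

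First, a minor point of hygiene: you repeatedly invoke (ii) inside the proof of (i)$\Rightarrow$(iii), which is circular in a bare cycle argument. This is harmless, since (i)$\Rightarrow$(ii) follows immediately by partitioning $\omega$ into the fibres $f^{-1}(\{n\})$ and applying (i), but you should say so explicitly rather than silently borrowing (ii).

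Second, and more substantively, the injective subcase is not a proof but a promissory note. Applying (ii) to $g$ is a detour: the injective case is no easier than the general one, because what you actually need is a set $D'\in\mathcal U$, $D'\subseteq D$, with $m>g(n)$ for all $n<m$ in $D'$, and injectivity of $g$ contributes nothing toward this. The clean way to finish, avoiding the case split entirely, is as follows. Let $G(n)=\max\bigl(\{g(k):k\le n,\ k\in D\}\cup\{n\}\bigr)$, form the finite intervals $I_k=[m_k,m_{k+1})$ where $m_0=0$ and $m_{k+1}=G(m_k)+1$, and use the $Q$-point property (an immediate consequence of (i), applied to the partition $\{I_k\}$) to obtain a selector $\{s_k:k\in\omega\}\in\mathcal U$ with $s_k\in I_k$. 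One of $\{s_{2k}:k\in\omega\}$, $\{s_{2k+1}:k\in\omega\}$ lies in $\mathcal U$; on that set any two elements sit in non-adjacent intervals, and a short computation gives $s_j\ge m_{i+2}>G(m_{i+1}-1)\ge G(s_i)\ge g(s_i)$ whenever the indices satisfy $j\ge i+2$. Intersecting with $D$ yields the desired $P_0$-homogeneous set in $\mathcal U$. The even/odd refinement is essential here: a raw selector meeting every $I_k$ need not be homogeneous, since for adjacent indices one cannot rule out $s_{i+1}\le G(s_i)$. Your sketch gestures at ``intervals tailored to the running maximum of $g$'' but omits exactly this point, which is where the argument either succeeds or fails.
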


We will use the following result later on. For completeness sake, we give a proof.

\begin{lem} \label{increasing.sequence}
	
	Let ${\mathcal U}$ be a selective ultrafilter and $\{B_n:\, n \in \omega\} \subseteq {\mathcal U}$. Then there exists an increasing sequence $\{a_n:\, n\in \omega \} \in {\mathcal U}$ such that $a_n \in B_n$ for each $n\in \omega$.
\end{lem}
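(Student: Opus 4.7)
The plan is to combine the $P$-point property of any selective ultrafilter with the Ramsey-theoretic characterization of selectivity in clause~(iii) of the preceding Proposition. First I would reduce to the case of a decreasing family: replacing $B_n$ by $B'_n=\bigcap_{m\le n}B_m$ keeps it in $\mathcal{U}$, and any $a_n\in B'_n$ automatically lies in $B_n$, so we may assume $B_0\supseteq B_1\supseteq\cdots$.

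Next, using the $P$-point property, I would pick a single $A\in\mathcal{U}$ with $A\setminus B_n$ finite for every $n$. Define a ``first miss'' function $L\colon A\to\omega\cup\{\infty\}$ by $L(k)=\min\{n\in\omega:k\notin B_n\}$, with $L(k)=\infty$ when no such $n$ exists. Since the family is decreasing, $\{k\in A:L(k)\le n\}=A\setminus B_n$ is finite for each $n$, so $L$ tends to $\infty$ along $A$. Extending $L$ by $0$ on $\omega\setminus A$, form the partition $[\omega]^2=P_0\sqcup P_1$ where $\{i,j\}\in P_0$ (with $i<j$) iff $L(i)<L(j)$, and the remaining pairs lie in $P_1$; by clause~(iii) there is a homogeneous $D\in\mathcal{U}$.

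The $P_1$-homogeneous case is impossible: on the infinite set $D\cap A\in\mathcal{U}$ the function $L$ would be non-increasing in the natural order of $\omega$ but also tend to $\infty$, a contradiction. Hence $L$ is strictly increasing on $D$. Set $D'=D\cap A\cap B_0\in\mathcal{U}$; on $D'$ the function $L$ is still strictly increasing and takes values $\ge 1$. Enumerating $D'=\{a_n:n\in\omega\}$ in increasing order then yields $L(a_n)\ge L(a_0)+n\ge n+1$, which is exactly $a_n\in B_n$ (since $k\in B_m$ iff $m<L(k)$). As $\{a_n:n\in\omega\}=D'\in\mathcal{U}$, this finishes the proof.

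The step that requires the most care is the Ramsey dichotomy: the partition has to be arranged so that the ``bad'' side ($P_1$) is genuinely excluded by the $P$-point prelude, and one must shift indices so that the final inequality $L(a_n)>n$, which encodes $a_n\in B_n$, is strict rather than merely $L(a_n)\ge n$. This is precisely why intersecting with $B_0$ to force $L\ge 1$ on $D'$ is essential; the remaining steps are bookkeeping.
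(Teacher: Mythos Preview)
Your argument is essentially correct, but there is one small slip in the treatment of the $P_1$-homogeneous case. You allow $L$ to take the value $\infty$, and your ``$L$ tends to $\infty$ along $A$'' only says that $\{k\in A:L(k)\le n\}$ is finite for each $n\in\omega$; this does not rule out $L\equiv\infty$ on an infinite set. Concretely, if $\bigcap_n B_n\in\mathcal{U}$ (which you have not excluded), then every pair from this intersection lies in $P_1$, so a $P_1$-homogeneous $D\in\mathcal{U}$ can occur and there is no contradiction. The fix is trivial: either observe that in this case $D\cap A\subseteq\bigcap_n B_n$ and its increasing enumeration already solves the problem, or (as the paper does) further shrink the $B_n$ at the outset so that $\bigcap_n B_n=\emptyset$, making $L$ everywhere finite.

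Apart from this, your route differs from the paper's in the middle step. The paper uses the partition form of selectivity directly: after arranging $\bigcap_n B_n=\emptyset$, the sets $B_n\setminus B_{n+1}$ partition $B_0$ into pieces not in $\mathcal{U}$, so selectivity yields a selector $\{b_n:n\in\omega\}\in\mathcal{U}$ with $b_n\in B_n$, and then the Ramsey property is invoked to make the enumeration increasing. You instead pass through the $P$-point property to get a pseudo-intersection $A$, encode membership via the rank function $L$, and apply the Ramsey property to $L$. Both arguments finish with the same Ramsey step; the paper's version is slightly shorter since it avoids introducing $A$ and $L$, while yours makes the index-shift bookkeeping (your observation $L(a_n)\ge n+1$) a bit more transparent.
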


\begin{proof}
By shrinking $B_n$ if necessary, assume that the $B_n$'s are strictly decreasing and have empty intersection. The sets $\{ B_n\setminus B_{n+1}:\, n \in \omega\}$ form a partition of $B_0$ into subsets that are not in ${\mathcal U}$.
 Therefore, there exists $b_n \in  B_n\setminus B_{n+1}$ such that $\{b_n:\, n \in \omega \}\in {\mathcal U}$.
 This enumeration is clearly injective. Define the following partition:\, if $\{x,y\}\in [\omega]^2$ and there exists  $n< m$, $b_n<b_m$ and $\{x,y\}=\{b_n,b_m\}$ then $\{x,y\} \in P_0$ otherwise $\{x,y\} \in P_1$.

  Suppose that $D \in {\mathcal U}$ is such that $[D]^2 \subseteq P_i$ for some $i<2$.
   Without loss of generality we can assume that $D\subseteq \{b_n:\, n \in \omega \}$. Since $D$ is infinite, it follows that $[D]^2$ cannot be a subset of $P_1$.  Therefore $[D]^2 \subseteq P_0$. Let $J$ be such that $\{ b_n:\, n \in J\}=D$ and let $\{n_k:\, k \in \omega\}$ be increasing such that $\{n_k:\, k \in \omega \}=J$.  Since $[D]^2\subseteq P_0$, it follows that $(b_n:\, n \in J)$ is increasing. Set $a_k =b_{n_k}$. Then $\{a_k:\, k \in \omega \}$ is an increasing sequence and  $a_k \in B_{n_k}\subseteq B_k$ for each $k\in \omega$.
\end{proof}

If ${\mathcal U}$ is a selective ultrafilter then the ultrapower $(\mathbb{Z}^{(\mathfrak{c})})^{\omega} / {\mathcal U}$ is the
set $\{[f]_{\mathcal U} : f \in (\mathbb{Z}^{(\mathfrak{c})})^{\omega} \ \hbox{is injective}\} \cup $ $ \{[\vec{J}]_{\mathcal U} : J \in \mathbb{Z}^{(\mathfrak{c})}\}$. Let $\{h_{\xi} : 0 < \xi < \mathfrak{c}\}$ be an enumeration of $\{h \in (\mathbb{Z}^{(\mathfrak{c})})^{\omega} : h \ \hbox{is injective}\}$ such that \[\bigcup_{n \in \omega} \supp h_{\xi}(n) \subseteq \xi \ \hbox{for every} \ \xi \in \mathfrak{c} \setminus \{0\}.\]

\begin{prop}\label{prop_perm}
Let ${\mathcal U} \in \omega^{*}$ be selective. For each $\xi \in \mathfrak{c} \setminus \{0\}$, there exists a permutation $\sigma_{\xi}$ of $\omega$ such that $\{[h_{\xi} \circ \sigma_{\xi}]_{\mathcal U} : \xi \in \mathfrak{c} \setminus \{0\}\} \cup \{[\overrightarrow{\chi_{\beta}}]_{\mathcal U} : \beta < \mathfrak{c}\}$ is a linearly independent subset of $(\mathbb{Q}^{(\mathfrak{c})})^{\omega} / {\mathcal U}$.
\end{prop}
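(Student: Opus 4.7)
The plan is to construct $\sigma_\xi$ by transfinite recursion on $\xi\in\mathfrak c\setminus\{0\}$, maintaining as invariant that the partial family $\{[h_{\xi'}\circ\sigma_{\xi'}]_{\mathcal U}:0<\xi'\leq\xi\}\cup\{[\vec{\chi_\beta}]_{\mathcal U}:\beta<\mathfrak c\}$ is $\mathbb Q$-linearly independent. Suppose $\sigma_{\xi'}$ has been chosen for $\xi'<\xi$. A would-be linear dependence after defining $\sigma_\xi$ takes the form
\[
q_\xi[h_\xi\circ\sigma_\xi]_{\mathcal U}+\sum_{\xi'\in F}q_{\xi'}[h_{\xi'}\circ\sigma_{\xi'}]_{\mathcal U}+\sum_{\beta\in B}r_\beta[\vec{\chi_\beta}]_{\mathcal U}=0
\]
with $F\subseteq\xi$ finite, $B\subseteq\mathfrak c$ finite and $q_\xi\neq0$. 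Setting $\eta=\max(F\cup\{0\})$ and restricting to coordinates $\alpha\in[\eta,\xi)$ annihilates every $h_{\xi'}$-contribution (since $\supp h_{\xi'}(n)\subseteq\xi'\leq\eta$ for $\xi'\in F$), reducing the requirement to $(h_\xi\circ\sigma_\xi)(n)|_{[\eta,\xi)}=c$ for $\mathcal U$-many $n$, where $c\in\mathbb Q^{([\eta,\xi))}$ is the finitely supported constant $-\tfrac{1}{q_\xi}\sum_{\beta\in B\cap[\eta,\xi)}r_\beta\chi_\beta$. Hence the task boils down to: choose $\sigma_\xi$ so that, for every $\eta<\xi$ and every finitely supported $c\in\mathbb Q^{([\eta,\xi))}$, one has $\sigma_\xi^{-1}(E^c_\eta)\notin\mathcal U$, where $E^c_\eta:=\{m:h_\xi(m)|_{[\eta,\xi)}=c\}$.

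The key leverage from selectivity is that only $c$'s lying in the countable image of $\phi_{\eta,\xi}\colon m\mapsto h_\xi(m)|_{[\eta,\xi)}$ can realize the bad event, so the number of problematic pairs $(\eta,c)$ at stage $\xi$ is at most $|\xi|\cdot\aleph_0<\mathfrak c$. Applying the Comfort--Negrepontis characterization (ii) to each $\phi_{\eta,\xi}$ (via an enumeration of its range) produces $A_\eta\in\mathcal U$ on which $\phi_{\eta,\xi}$ is either constant or injective. In the injective case, every fibre $E^c_\eta\cap A_\eta$ has at most one element, so any $\sigma_\xi$ with $\sigma_\xi^{-1}(A_\eta)\in\mathcal U$ (for instance the identity) automatically satisfies $\sigma_\xi^{-1}(E^c_\eta)\notin\mathcal U$ for all $c$.

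The main obstacle is the constant case, in which $\phi_{\eta,\xi}$ equals some $c_\eta$ on a $\mathcal U$-set: this is a rigid feature of $h_\xi$ that no rearrangement can destroy. Let $\eta^*<\xi$ be the minimum such $\eta$ (positive, since injectivity of $h_\xi$ rules out $\eta=0$). On a $\mathcal U$-large set the translated sequence $\tilde h_\xi(m)=h_\xi(m)-\overline{c_{\eta^*}}$ has support contained in $[0,\eta^*)$; invoking Lemma~\ref{increasing.sequence} to produce an increasing enumeration of this set, $\tilde h_\xi$ becomes $\mathcal U$-equivalent to a genuine injective sequence in $\mathbb Z^{(\eta^*)}$, and the remaining requirement is that $[\tilde h_\xi\circ\sigma_\xi]_{\mathcal U}$ avoid the $\mathbb Q$-span of the previously constructed classes. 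Because the inductive hypothesis guarantees linear independence of that span, coordinating $\sigma_\xi$ with the previously chosen $\sigma_{\xi'}$'s (in particular, forcing $\sigma_\xi$ to disagree with each $\sigma_{\xi'}$ on a $\mathcal U$-large set) leaves $\mathfrak c$ many valid permutations for each individually nontrivial constraint. Since the total number of constraints remains below $\mathfrak c$, a transfinite bookkeeping argument produces the desired $\sigma_\xi$ and completes the recursion.
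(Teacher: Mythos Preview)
Your overall strategy (transfinite recursion on $\xi$, maintaining linear independence of the already-built family together with all constant classes) matches the paper's. The preliminary reduction via projection to $[\eta,\xi)$ is correct but not needed; more importantly, the argument you give in the final paragraph is not a proof, and this is exactly the step that carries the whole construction.

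The problem is the counting. You assert that ``leaving $\mathfrak c$ many valid permutations for each individually nontrivial constraint'' together with ``fewer than $\mathfrak c$ constraints'' lets a bookkeeping argument finish. It does not. For a fixed $v$ in the span $V$ generated by the previously built classes (which has $|V|<\mathfrak c$), the set of \emph{bad} permutations $\{\sigma:[h_\xi\circ\sigma]_{\mathcal U}=v\}$ is either empty or an entire $\equiv_{\mathcal U}$-class of permutations, and each such class already has size $\mathfrak c$. So the union of the bad sets over all $v\in V$ is a union of fewer than $\mathfrak c$ sets each of size $\mathfrak c$, which can certainly have size $\mathfrak c$ --- the same size as the set of all permutations. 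Your remark about ``forcing $\sigma_\xi$ to disagree with each $\sigma_{\xi'}$ on a $\mathcal U$-large set'' is irrelevant to linear independence: distinct permutations can produce $\mathcal U$-equivalent, or linearly dependent, composites.

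What is missing is precisely the device the paper uses: one must exhibit $\mathfrak c$ permutations $\psi_\mu$ for which the classes $[h_\xi\circ\psi_\mu]_{\mathcal U}$ are \emph{pairwise distinct}; then, since $|V|<\mathfrak c$, one of these classes lies outside $V$. The paper achieves this cleanly: pick $A\in\mathcal U$ with $\omega\setminus A$ infinite, take an almost disjoint family $\{A_\mu:\mu<\mathfrak c\}$ on $A$, and let $\psi_\mu$ be any permutation with $\psi_\mu[A]=A_\mu$. Injectivity of $h_\xi$ then forces $\{n\in A:h_\xi(\psi_\mu(n))=h_\xi(\psi_\nu(n))\}$ to be finite for $\mu\neq\nu$, so the $\mathcal U$-classes are distinct. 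Note that this step uses only that $\mathcal U$ is free; the constant/injective dichotomy from selectivity that you invoke is a detour.
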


\begin{proof}
We proceed by transfinite induction. Let $\alpha \in \mathfrak{c} \setminus \{0\}$ and suppose that for each $0 < \xi < \alpha$, we have defined a permutation $\sigma_{\xi}$ of $\omega$ such that $\{[h_{\zeta} \circ \sigma_{\zeta}]_{\mathcal U} : 0 < \zeta < \xi\} \cup \{[\overrightarrow{\chi_{\beta}}]_{\mathcal U} : \beta < \mathfrak{c}\}$ is a linearly independent subset of $(\mathbb{Q}^{(\mathfrak{c})})^{\omega} / {\mathcal U}$.

Let $A \in {\mathcal U}$ be such that $\omega \setminus A$ is infinite and $\{A_{\mu} : \mu < \mathfrak{c}\}$ be an almost disjoint family on $A$. For each $\mu < \mathfrak{c}$, let $\psi_{\mu}$ be a permutation of $\omega$ such that $\psi_{\mu}(A) = A_{\mu}$. We have that $[h_{\alpha} \circ \psi_{\mu}]_{\mathcal U} \neq [h_{\alpha} \circ \psi_{\nu}]_{\mathcal U}$ if $\mu < \nu < \mathfrak{c}$. Hence, $|\{[h_{\alpha} \circ \psi_{\mu}]_{\mathcal U} : \mu < \mathfrak{c}\}| = \mathfrak{c}$ and we can find $\mu_{\alpha} < \mathfrak{c}$ such that $[h_{\alpha} \circ \psi_{\mu_{\alpha}}]_{\mathcal U}$ does not belong to the subspace of $(\mathbb{Q}^{(\mathfrak{c})})^{\omega} / {\mathcal U}$ generated by $\{[h_{\xi} \circ \sigma_{\xi}]_{\mathcal U} : 0 < \xi < \alpha\} \cup \{[\overrightarrow{\chi_{\beta}}]_{\mathcal U} : \beta < \alpha\}$. Put $\sigma_{\alpha} = \psi_{\mu_{\alpha}}$. We know that $\{[h_{\xi} \circ \sigma_{\xi}]_{\mathcal U} : 0 < \xi \leq \alpha\} \cup \{[\overrightarrow{\chi_{\beta}}]_{\mathcal U} : \beta < \alpha\}$ is linearly independent. Since $\bigcup_{n \in \omega} \supp h_{\xi}(n) \subseteq \xi \subseteq \alpha$ for every $0 < \xi \leq \alpha$, then $\{[h_{\xi} \circ \sigma_{\xi}]_{\mathcal U} : 0 < \xi \leq \alpha\} \cup \{[\overrightarrow{\chi_{\beta}}]_{\mathcal U} : \beta < \mathfrak{c}\}$ is also linearly independent.

 At the end of the induction, it is clear that $\{[h_{\xi} \circ \sigma_{\xi}]_{\mathcal U} : \xi \in \mathfrak{c} \setminus \{0\}\} \cup \{[\overrightarrow{\chi_{\beta}}]_{\mathcal U} : \beta < {\mathfrak c}\}$ is a linearly independent subset of $(\mathbb{Q}^{(\mathfrak{c})})^{\omega} / {\mathcal U}$.
\end{proof}

\begin{lem} \label{homo.countable.piece} Let ${\mathcal U}\in \omega^*$ be a selective ultrafilter and $(\sigma_\xi:\, \xi <{\mathfrak c})$ be permutations as in Lemma \ref{prop_perm}. Let $ \{c\} \cup \{c_p:\, p \in \omega\}$ be a set of nonzero elements of $ {\mathbb Z}^{({\mathfrak c})}$. Let  $\{u_p:\, p \in \omega \}$ be a subfamily of $\{h_\xi \circ \sigma_\xi :\, \xi <{\mathfrak c}\}$.

 If $T$ is a countable subset of ${\mathfrak c}$ such that
$\supp c \, \cup \bigcup_{p <\omega} \supp c_p \cup \bigcup_{p, n \in \omega} \supp u_p(n) \subseteq T$ then there exists a homomorphism  $\Lambda :\, {\mathbb Z}^{(T)}\longrightarrow {\mathbb T}$ satisfying

\begin{enumerate}[(1)]

\item $\Lambda (c) \neq 0$ and

\item ${\mathcal U}$-limit $ \{ \Lambda( u_p(n)):\, n \in \omega \} =\Lambda( c_p)$ for each $p\in \omega$.

\end{enumerate}

\end{lem}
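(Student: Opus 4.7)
The plan is to construct $\Lambda$ via a recursive arc-approximation scheme. Set $w_p = u_p - \overrightarrow{c_p} \in (\mathbb{Z}^{(T)})^{\omega}$, so condition (2) becomes $\mathcal{U}\text{-}\lim_{n} \Lambda(w_p(n))=0$ for each $p$. Because the change from $\{[u_p]_{\mathcal U}\}$ to $\{[w_p]_{\mathcal U}\}$ is an invertible integer transformation relative to $\{[\overrightarrow{\chi_\beta}]_{\mathcal U}:\beta<\mathfrak c\}$, Proposition \ref{prop_perm} yields that $\{[w_p]_{\mathcal U}:p\in\omega\}\cup\{[\overrightarrow{\chi_\beta}]_{\mathcal U}:\beta<\mathfrak c\}$ is also linearly independent in $(\mathbb Q^{(\mathfrak c)})^{\omega}/\mathcal U$.

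First, I would enumerate $T=\{\beta_k:k\in\omega\}$ and use the selectivity of $\mathcal U$ together with Lemma \ref{increasing.sequence} to extract, for each $p$, a set $A_p\in\mathcal U$ on which $(\supp w_p(n))_{n\in A_p}$ forms a $\Delta$-system with root $R_p$, and on which the \emph{fresh} coordinates $\supp w_p(n)\setminus R_p$ lie strictly past every previously enumerated coordinate as $n$ runs through $A_p$. This is a single-ultrafilter adaptation of the stack structure from \cite{tomita2015}, and it is what allows the countably many conditions (one for each $p$) to be woven together into one induction.

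Next, I would recursively build an increasing sequence of finite sets $F_k\subseteq T$ with $\{\beta_0,\dots,\beta_k\}\subseteq F_k$ and $\bigcup_k F_k = T$, together with arc functions $\phi_k:F_k\to\{\text{arcs of }\mathbb T\}$, nested so that $\phi_{k+1}(\beta)\subseteq\phi_k(\beta)$ and $\delta(\phi_k(\beta))\leq 2^{-k}$, subject to: every homomorphism $\Lambda:\mathbb Z^{(T)}\to\mathbb T$ with $\Lambda(\chi_\beta)\in\phi_k(\beta)$ for all $\beta\in F_k$ has $\Lambda(c)$ inside a fixed arc $A_0\subseteq\mathbb T$ avoiding $0$, and, for each $p\leq k$, there is $D_{p,k}\in\mathcal U$ (decreasing in $k$) such that $\Lambda(w_p(n))$ lies in an arc of length $\leq 2^{-k}$ about $0$ for every such $\Lambda$ and every $n\in D_{p,k}$. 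At the inductive step, the fresh coordinates in $\supp w_p(n)\setminus F_k$ for $n$ large in $A_p\cap D_{p,k}$ provide enough free arcs in $\phi_{k+1}$ to cancel the already-determined contribution $\sum_{\beta\in F_k}w_p(n)(\beta)\Lambda(\chi_\beta)$ to within $2^{-(k+1)}$; the linear independence of the $[w_p]_{\mathcal U}$'s and $[\overrightarrow{\chi_\beta}]_{\mathcal U}$'s in $(\mathbb Q^{(\mathfrak c)})^{\omega}/\mathcal U$ is precisely what guarantees that no nontrivial constraint from previous stages rules out this choice. In the limit, $\Lambda(\chi_\beta):=\bigcap_k\phi_k(\beta)$ produces a homomorphism satisfying (1) (since $A_0$ avoids $0$) and (2) (since the nested $D_{p,k}\in\mathcal U$ witness $\mathcal U$-convergence of $\Lambda(w_p(n))$ to $0$).

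The main obstacle is the bookkeeping at the inductive step: at stage $k+1$ the fresh coordinates assigned to different $w_p$'s and different $n$'s may overlap, and the conditions for all $p\leq k+1$ must be honored simultaneously. Selectivity of $\mathcal U$ (rather than mere $P$-pointness) is used, via Lemma \ref{increasing.sequence}, to produce diagonal sets $D_{p,k+1}\in\mathcal U$ along which all the required cancellations hold together, while the linear independence from Proposition \ref{prop_perm} ensures that the linear systems solved at each stage are never overdetermined.
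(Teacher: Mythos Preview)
Your proposal has a genuine gap: the ``fresh coordinates'' mechanism you describe covers only one of the two ways a sequence in $(\mathbb Z^{(\mathfrak c)})^\omega$ can avoid being constant mod~$\mathcal U$. Consider $u_p(n)=\sigma_\xi(n)\cdot\chi_0$, which arises from $h_\xi(m)=m\chi_0$; then $\supp w_p(n)=\{0\}\cup\supp c_p$ is a fixed finite set for every $n$, so the $\Delta$-system has empty tips and there are \emph{no} fresh coordinates at all. Your inductive step, which relies on free arcs at coordinates outside $F_k$ to absorb the already-determined part, simply has nothing to work with. More generally, your requirement that there exist $D_{p,k}\in\mathcal U$ with $\Lambda(w_p(n))$ in a $2^{-k}$-arc for \emph{every} $n\in D_{p,k}$ and every $\Lambda$ compatible with $\phi_k$ is unachievable here: if $\phi_k(0)$ has length $\epsilon>0$ then $\sigma_\xi(n)\cdot\phi_k(0)$ has length $\sigma_\xi(n)\epsilon$, which is large for all but finitely many $n$.

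This is exactly why the paper's proof invokes the full integer-stack machinery of \cite{tomita2015}. The stack decomposes the (growing) finite family $\{u_0,\dots,u_{p-1}\}$ into $g$-type pieces, where a genuinely new coordinate $\mu_l(n)$ appears (your fresh-coordinate idea), \emph{and} $f$-type pieces, where some coefficient $f_{i,j,k}(n)(\xi_i(n))$ tends to infinity along a possibly repeated coordinate. For the $f$-type pieces the arc equation is solved not by introducing a new coordinate but by exploiting that a long enough multiple of a short arc wraps densely around $\mathbb T$ (this is the role of Kronecker's lemma and the constant $L(\theta_0,\dots,\theta_{r-1},\epsilon)$ in Lemma~\ref{homomorphism.step}). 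Moreover, the paper does \emph{not} try to control a whole ultrafilter set at each stage: at stage $b$ it solves the arc equation for a single carefully chosen index $a_{n_b}$ (one per sequence), and selectivity via Lemmas~\ref{increasing.sequence} and~\ref{lem_intervalinhos} is what guarantees that the set $\{a_{n_b}:b\in\omega\}$ of these single witnesses lies in $\mathcal U$. Your scheme would need both the $f$-type mechanism and this one-witness-per-stage bookkeeping to go through.
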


 The proof of Lemma \ref{homo.countable.piece} will be given in Section \ref{section.main.result}. Prior to this, in Section \ref{s.about.stacks},  we will need to state several definitions and results proved in early works that will be used in the proof.

Assuming Lemma \ref{homo.countable.piece}, we show that there are homomorphisms that preserve ${\mathcal U}$-limits for  sufficiently independent sequences module the selective ultrafilter ${\mathcal U}$.

\begin{prop}\label{prop_ext_hom} Let ${\mathcal U}\in \omega^*$ be a selective ultrafilter and $(\sigma_\xi:\, \xi <{\mathfrak c})$ be permutations as in Lemma \ref{prop_perm}.
Given $J \in \mathbb{Z}^{(\mathfrak{c})} \setminus \{0\}$, there exists a group homomorphism $\phi_{J} : \mathbb{Z}^{(\mathfrak{c})} \to \mathbb{T}$ satisfying the following conditions:

\begin{enumerate}[(i)]

  \item $\phi_{J}(J) \neq 0$;

  \item $\phi_{J}(\chi_{\xi}) = {\mathcal U}-\lim (\phi_{J}(h_{\xi} \circ \sigma_{\xi}(n)) : n \in \omega)$ for every $\xi \in \mathfrak{c} \setminus \{0\}$.

\end{enumerate}
\end{prop}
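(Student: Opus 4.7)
My plan for Proposition \ref{prop_ext_hom} is to bootstrap Lemma \ref{homo.countable.piece} from its countable conclusion up to all of $\mathbb{Z}^{(\mathfrak{c})}$. Two structural facts make this feasible. First, $\mathbb{Z}^{(\mathfrak{c})}$ is the free Abelian group on $\{\chi_\beta : \beta < \mathfrak{c}\}$, so a homomorphism into $\mathbb{T}$ is uniquely determined by the values assigned to those generators. Second, the enumeration is chosen so that $\supp h_\xi \circ \sigma_\xi(n) \subseteq \xi$ for every $0 < \xi < \mathfrak{c}$ and every $n \in \omega$, which turns condition (ii) into a well-founded recursion in the ordinal variable $\xi$.

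For the base, I would fix a countable set $T_0 \subseteq \mathfrak{c}$ with $\supp J \cup \{0\} \subseteq T_0$ and closed under the operation $\xi \mapsto \bigcup_{n \in \omega} \supp h_\xi \circ \sigma_\xi(n)$; this closure exists and is countable because iterating a countable-valued operation $\omega$ many times produces a countable set. Enumerating $T_0 \setminus \{0\} = \{\xi_p : p \in \omega\}$, I would apply Lemma \ref{homo.countable.piece} with $T = T_0$, $c = J$, $c_p = \chi_{\xi_p}$ and $u_p = h_{\xi_p} \circ \sigma_{\xi_p}$; the support hypothesis $\supp c \cup \bigcup_p \supp c_p \cup \bigcup_{p,n} \supp u_p(n) \subseteq T_0$ is exactly the closedness built into $T_0$. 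This yields $\Lambda_0 : \mathbb{Z}^{(T_0)} \to \mathbb{T}$ with $\Lambda_0(J) \neq 0$ and $\Lambda_0(\chi_\xi) = \mathcal{U}-\lim \Lambda_0(h_\xi \circ \sigma_\xi(n))$ for every $\xi \in T_0 \setminus \{0\}$.

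Next I would define $\phi_J(\chi_\beta) \in \mathbb{T}$ for every $\beta < \mathfrak{c}$ by transfinite recursion on $\beta$: set $\phi_J(\chi_\beta) := \Lambda_0(\chi_\beta)$ if $\beta \in T_0$, and $\phi_J(\chi_\beta) := \mathcal{U}-\lim \phi_J(h_\beta \circ \sigma_\beta(n))$ otherwise. The right-hand side of the second clause is well-defined, because expanding $\phi_J(h_\beta \circ \sigma_\beta(n))$ as a finite $\mathbb{Z}$-linear combination of values $\phi_J(\chi_\gamma)$ only involves $\gamma \in \supp h_\beta \circ \sigma_\beta(n) \subseteq \beta$, all of which have been set earlier in the recursion, and the $\mathcal{U}$-limit exists since $\mathbb{T}$ is compact. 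By freeness of $\mathbb{Z}^{(\mathfrak{c})}$ on $\{\chi_\beta\}$, these choices assemble into a single group homomorphism $\phi_J : \mathbb{Z}^{(\mathfrak{c})} \to \mathbb{T}$.

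Verification is then straightforward. Condition (i) holds because $\supp J \subseteq T_0$ gives $\phi_J(J) = \Lambda_0(J) \neq 0$. For condition (ii), the case $\xi \notin T_0$ is the defining clause, while for $\xi \in T_0 \setminus \{0\}$ I observe that $\phi_J$ and $\Lambda_0$ agree on the generators of $\mathbb{Z}^{(T_0)}$ and hence on all of $\mathbb{Z}^{(T_0)}$; in particular $\phi_J(h_\xi \circ \sigma_\xi(n)) = \Lambda_0(h_\xi \circ \sigma_\xi(n))$, and the conclusion of Lemma \ref{homo.countable.piece} delivers the required equality. The real difficulty of the proposition is isolated inside Lemma \ref{homo.countable.piece}; here the only delicate point is the ordinal bookkeeping that keeps the recursion well-founded, handed to us by the containment $\supp h_\xi \circ \sigma_\xi(n) \subseteq \xi$, while compactness of $\mathbb{T}$ absorbs all analytic content of the $\mathcal{U}$-limit step.
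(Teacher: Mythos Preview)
Your proposal is correct and follows essentially the same approach as the paper: obtain a countable set $T$ closed under the supports of the $h_\xi\circ\sigma_\xi$, apply Lemma~\ref{homo.countable.piece} there, and then extend by transfinite recursion on $\xi$ using $\supp h_\xi\circ\sigma_\xi(n)\subseteq\xi$ and compactness of $\mathbb{T}$. Your write-up is in fact a bit more explicit than the paper's about why the recursion is well-founded and why freeness of $\mathbb{Z}^{(\mathfrak{c})}$ suffices for the extension step.
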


\begin{proof} Fix $J\in {\mathbb Z}^{({\mathfrak c})}$ that is nonzero. Then there exists a countable set $T$ such that $\supp J \subseteq T$ and for every $\alpha \in T \setminus \{0\}$ we have $\supp h_\alpha (n) \subseteq T$ for each $n\in \omega$. Enumerate $T\setminus \{0\}$ as $\{ e_p:\, p\in \omega\}$ and let $c=J$ and $c_p =\chi_{e_p}$ and $u_p=h_{e_p}\circ \sigma_{e_p}$ for each $p<\omega$. By Lemma \ref{homo.countable.piece}, there exists a homomorphism $\Lambda_T:\, {\mathbb Z}^{(T)} \longrightarrow {\mathbb T}$ such that

$\Lambda_T(J) \neq 0$ and

$\Lambda_T(\chi_{\xi}) = {\mathcal U}-\lim (\Lambda_T(h_{\xi} \circ \sigma_{\xi}(n)) : n \in \omega)$ for every $\xi \in T\setminus \{0\}$.

We now use induction on ${\mathfrak c} \setminus T$ to define the extension $\Lambda:\, {\mathbb Z}^{(\mathfrak c)} \longrightarrow {\mathbb T}$ of $\Lambda_T$ that satisfies

$\Lambda(\chi_{\xi}) = {\mathcal U}-\lim (\Lambda(h_{\xi} \circ \sigma_{\xi}(n)) : n \in \omega)$ for every $\xi \in {\mathfrak c} \setminus (T\cup \{0\})$.

 This is possible as ${\mathbb T}$ is divisible and compact, and $\bigcup_{n\in \omega} \supp h_{\xi} \circ \sigma_{\xi}(n) \subseteq \xi$ for every $\xi >0$.
\end{proof}

\begin{teo}\label{teo_exist_top}
The existence of a selective ultrafilter ${\mathcal U}$ implies the existence of a countably compact Hausdorff group topology on the free Abelian group of size continuum.
\end{teo}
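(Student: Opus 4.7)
The plan is to realize the free Abelian group of cardinality $\mathfrak{c}$ as $\mathbb{Z}^{(\mathfrak{c})}$ and to use the family of homomorphisms supplied by Proposition \ref{prop_ext_hom} to embed it into a power of $\mathbb{T}$. Setting $I = \mathbb{Z}^{(\mathfrak{c})} \setminus \{0\}$, I would define $\Phi \colon \mathbb{Z}^{(\mathfrak{c})} \to \mathbb{T}^{I}$ by $\Phi(x)(J) = \phi_{J}(x)$ and endow $\mathbb{Z}^{(\mathfrak{c})}$ with the initial topology making every $\phi_{J}$ continuous, i.e., the pullback via $\Phi$ of the product topology on $\mathbb{T}^{I}$.

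That this topology makes $\mathbb{Z}^{(\mathfrak{c})}$ into a Hausdorff topological group is immediate: each $\phi_{J}$ is a group homomorphism, so $\Phi$ is one as well; and by Proposition \ref{prop_ext_hom}(i), for every nonzero $J$ one has $\phi_{J}(J) \neq 0$, so $\Phi$ is injective and the pulled-back topology separates points.

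For countable compactness it suffices to show that every injective sequence in $\mathbb{Z}^{(\mathfrak{c})}$ has an accumulation point. Let $g \in (\mathbb{Z}^{(\mathfrak{c})})^{\omega}$ be injective; by the enumeration fixed before Proposition \ref{prop_perm} there is a unique $\xi \in \mathfrak{c} \setminus \{0\}$ with $g = h_{\xi}$, and I claim that $\chi_{\xi}$ is an accumulation point of $\{g(n) : n \in \omega\}$. A basic neighborhood of $\chi_{\xi}$ in our topology has the form $W = \bigcap_{i=1}^{k} \phi_{J_{i}}^{-1}(V_{i})$, where each $V_{i}$ is an open neighborhood of $\phi_{J_{i}}(\chi_{\xi})$ in $\mathbb{T}$. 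By Proposition \ref{prop_ext_hom}(ii) applied to each $J_{i}$, the set $A_{i} = \{n \in \omega : \phi_{J_{i}}(h_{\xi} \circ \sigma_{\xi}(n)) \in V_{i}\}$ belongs to $\mathcal{U}$, so $A := \bigcap_{i=1}^{k} A_{i} \in \mathcal{U}$ is infinite, and for every $n \in A$ one has $h_{\xi}(\sigma_{\xi}(n)) \in W$. Thus $W$ contains infinitely many values of $g$, yielding the desired accumulation point.

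I expect the only technical nuisance to be bookkeeping the permutation $\sigma_{\xi}$: the convergence statement in Proposition \ref{prop_ext_hom}(ii) refers to the reordered sequence $h_{\xi} \circ \sigma_{\xi}$ rather than to $h_{\xi}$ itself, but this is harmless because $\sigma_{\xi}$ is a bijection of $\omega$ (equivalently, the pushforward $\{\sigma_{\xi}(B) : B \in \mathcal{U}\}$ is a free ultrafilter on $\omega$ along which $h_{\xi}$ converges to $\chi_{\xi}$, giving an honest ultrafilter limit). Since $|\mathbb{Z}^{(\mathfrak{c})}| = \mathfrak{c}$, this completes the construction of a Hausdorff countably compact group topology on the free Abelian group of size continuum.
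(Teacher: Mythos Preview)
Your proposal is correct and follows essentially the same route as the paper: define the initial topology induced by the family $\{\phi_J\}$, use condition~(i) for Hausdorffness, and use condition~(ii) together with the permutation $\sigma_\xi$ to produce $\chi_\xi$ as an accumulation point of any injective sequence $h_\xi$. The only slip is the word ``unique'': the enumeration $\{h_\xi : 0<\xi<\mathfrak{c}\}$ is not asserted to be injective (it must allow repetitions to satisfy $\bigcup_n \supp h_\xi(n)\subseteq\xi$), so you should only claim that \emph{some} $\xi$ satisfies $g=h_\xi$; this is harmless for the argument.
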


\begin{proof}
Let $\tau$ be the initial topology on $\mathbb{Z}^{(\mathfrak{c})}$ induced by the family $\{\phi_{J} : J \in \mathbb{Z}^{(\mathfrak{c})} \setminus \{0\}\}$ obtained in Proposition \ref{prop_ext_hom} and $f : \omega \to \mathbb{Z}^{(\mathfrak{c})}$. If there exists $A \in [\omega]^\omega$ such that $f |_{A}$ is a constant function, then $f$ has trivially an accumulation point. Otherwise, there exists $A\in [\omega]^\omega$  such that $\{ f(n):\, n \in A \}$ is one-to-one. Let $\xi \in \mathfrak{c} \setminus \{0\}$ be such that $\{ f(n):\, n \in A \}= \{h_\xi (n):\, n \in \omega\}$.  It follows from Proposition \ref{prop_ext_hom} that the sequence $\{ h_{\xi} \circ \sigma_{\xi}(n):\, n\in \omega\}$ has $\chi_{\xi}$ as its ${\mathcal U}$-limit point. Hence, $\chi_{\xi}$ is an accumulation point of $\{f(n) : n \in A\}$.
\end{proof}

\begin{teo}\label{teo_wallace}
The existence of a selective ultrafilter implies the existence of a Wallace semigroup.
\end{teo}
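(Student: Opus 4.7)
The plan is to realize a Wallace semigroup as the positive cone of the countably compact topological group $(\mathbb{Z}^{(\mathfrak{c})},\tau)$ furnished by Theorem \ref{teo_exist_top}. Concretely, I would take
\[
M:=\{f\in\mathbb{Z}^{(\mathfrak{c})}:f(\alpha)\ge 0 \text{ for every } \alpha<\mathfrak{c}\}
\]
with the subspace topology, and claim that $M$ is a Wallace semigroup.

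Three of the four required properties are essentially free. The set $M$ is closed under the group operation $+$, so it is an algebraic subsemigroup of $\mathbb{Z}^{(\mathfrak{c})}$; being a subspace of the Hausdorff topological group $(\mathbb{Z}^{(\mathfrak{c})},\tau)$, it inherits a Hausdorff topology on which addition restricts to a jointly continuous operation. Both-sided cancellativity is inherited from the ambient Abelian group. Finally, $M$ is not a group: for any $\alpha<\mathfrak{c}$ the generator $\chi_\alpha$ lies in $M$, but its only inverse in $\mathbb{Z}^{(\mathfrak{c})}$, namely $-\chi_\alpha$, is not non-negative, hence does not lie in $M$.

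The main step is to verify countable compactness of $(M,\tau|_M)$. Let $S\subseteq M$ be infinite. If some element of $M$ is attained infinitely often within $S$, the corresponding constant sequence trivially accumulates in $M$. Otherwise $S$ contains an injective sequence whose range is a countably infinite subset of $S$. Since $\{h_\xi:0<\xi<\mathfrak{c}\}$ enumerates all injective sequences in $\mathbb{Z}^{(\mathfrak{c})}$, one can find $\xi\in\mathfrak{c}\setminus\{0\}$ with $\{h_\xi(n):n\in\omega\}\subseteq S$. By Proposition \ref{prop_ext_hom} together with the definition of $\tau$ as the initial topology induced by the family $\{\phi_J\}$, the element $\chi_\xi$ is the $\mathcal{U}$-limit of $(h_\xi\circ\sigma_\xi(n):n\in\omega)$ in $(\mathbb{Z}^{(\mathfrak{c})},\tau)$, and therefore an accumulation point of the set $\{h_\xi(n):n\in\omega\}\subseteq S$. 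Crucially, $\chi_\xi\in M$, so this accumulation point lies inside $M$, as required.

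I do not expect any substantive obstacle here: the whole point of the construction of $\tau$ in Theorem \ref{teo_exist_top} is that every injective sequence in $\mathbb{Z}^{(\mathfrak{c})}$ accumulates at one of the canonical generators $\chi_\xi$, each of which belongs to the positive cone $M$. Thus the argument that produces countable compactness for $\mathbb{Z}^{(\mathfrak{c})}$ specializes at once to $M$, and the theorem follows.
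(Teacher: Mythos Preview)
Your proposal is correct and follows essentially the same route as the paper: the Wallace semigroup is the non-negative cone $\mathbb{N}^{(\mathfrak{c})}\subseteq(\mathbb{Z}^{(\mathfrak{c})},\tau)$, and countable compactness is inherited because the accumulation points $\chi_\xi$ produced in the proof of Theorem~\ref{teo_exist_top} all lie in this cone. The paper's own argument is just a terser version of what you wrote.
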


\begin{proof}
According to Theorem \ref{teo_exist_top}, there exists a countably compact Hausdorff group topology $\tau$ on $\mathbb{Z}^{(\mathfrak{c})}$. Let $\mathbb{N}^{(\mathfrak{c})}$ be endowed with the subspace topology induced by $\tau$. Clearly, $\mathbb{N}^{(\mathfrak{c})}$ is a both-sided cancellative topological semigroup which is not a topological group. It follows from our construction that if $f : \omega \to \mathbb{N}^{(\mathfrak{c})}$, then $\{f(n) : n \in \omega\}$ has an accumulation point in $\mathbb{N}^{(\mathfrak{c})}$. Hence $\mathbb{N}^{(\mathfrak{c})}$ is countably compact and therefore a Wallace semigroup.
\end{proof}

\section{About Stacks} \label{s.about.stacks}

\subsection{Defining integer stacks and stacks}
The stacks were the key new concept introduced in \cite{tomita2015} to show that the free Abelian group of cardinality ${\mathfrak c}$ can be endowed with a group topology that makes all its finite powers countably compact.

\begin{defin} \label{defin.stack} An \emph{integer stack} ${\mathcal S}$ \emph{on} $A$ consists of
	
\begin{enumerate}[(i)]

 \item an infinite subset $A$ of $\omega$;

 \item natural numbers $s,t, M$; positive integers $ r_i$   for each $0\leq i<s$ and positive integers $ r_{i,j}$ for each $0\leq i<s$ and $0\leq j<r_i$;

 \item functions $f_{i,j,k} \in ({\mathbb Z}^{({\mathfrak c})})^A$ for each $0\leq i< s$, $0\leq j<r_i$ and $0\leq k <r_{i,j}$ and
 $g_l \in ({\mathbb Z}^{({\mathfrak c})})^A$ for each $0\leq l<t$;

  \item sequences $\xi_i \in {\mathfrak c}^A$ for $0\leq i<s$ and $\mu_l \in {\mathfrak c}^A$ for each $0\leq l <t$ and

  \item real numbers $\theta_{i,j,k} $ for each $0\leq i<s$, $0\leq j<r_i$ and $0\leq k <r_{i,j}$ that satisfy the following conditions:

\end{enumerate}

\begin{enumerate}[(1)]

 \item $\mu_l (n) \in \supp g_l(n)$ for each $n\in A$;

 \item $ \mu_{l^*}(n) \notin \supp g_{l} (n)$ for each $n \in A$ and $0\leq l^*<l<t$;

 \item the elements of $\{ \mu_l(n):\, 0\leq l<t \text{ and } n\in A\}$ are pairwise distinct;

 \item $|g_l(n)| \leq M$ for each $n \in A$ and $0\leq l <t$;

 \item $\{\theta_{i,j,k}:\, 0 \leq k <r_{i,j}\}$ is a linearly independent subset of ${\mathbb R}$ as a ${\mathbb Q}$-vector space for each $0\leq i<s$ and $0\leq j<r_i$;

 \item $ \lim_{n\in A} \frac{f_{i,j,k}(n)(\xi_i(n))}{f_{i,j,0}(n)(\xi_i(n))} \longrightarrow \theta_{i,j,k}$ for each $0\leq i<s$, $0\leq j<r_i$ and $0\leq k <r_{i,j}$;

 \item $\{|f_{i,j,k}(n)(\xi_i(n))|:\, n \in A \} \nearrow +\infty$ for each $0\leq i<s$, $0\leq j<r_i$ and $0\leq k <r_{i,j}$;

 \item $|f_{i,j,k}(n)(\xi_i(n))|> |f_{i,j,k^*}(n)(\xi_i(n))|$ for each $n\in A$, $i<s$, $j<r_i$ and $0\leq k<k^*<r_{i,j}$;

 \item $  \left\{ \frac{|f_{i,j,k}(n)(\xi_i(n))|}{|f_{i,j^*,k^*}(n)(\xi_i(n))|}:\, n \in A \right\}$ converges monotonically to $0$ for each $0\leq i<s$, $0\leq j^*< j<r_i$, $0\leq k<r_{i,j}$ and $0\leq k^* <r_{i,j^*}$ and

 \item $\{f_{i,j,k}(n)(\xi_{i^*}(n)) :\, n \in A \} \subseteq [-M,M]$ for each $0\leq i^* <i <s$, $0\leq j<r_i$ and $0\leq k<r_{i,j}$.

\end{enumerate}
\end{defin}

\begin{defin} Given an integer stack ${\mathcal S}$ and a natural number $N$, the \emph{$Nth$ root} of ${\mathcal S}$, $\frac{1}{N} \cdot {\mathcal S}$ is obtained by
keeping all the structure in ${\mathcal S}$ with the exception of the functions. A function $f_{i,j,k} \in {\mathcal S}$ is replaced by
$\frac{1}{N} \cdot f_{i,j,k}$ in  $\frac{1}{N} \cdot {\mathcal S}$ for each $0\leq i <s$, $0\leq j<r_i$ and $0\leq k<r_{i,j}$ and a function $g_l \in {\mathcal S}$ is replaced by $\frac{1}{N} \cdot  g_l$  in  $\frac{1}{N} \cdot {\mathcal S}$ for each $0\leq l <t$.

A \emph{ stack} will be the $Nth$ root of an integer stack for some positive integer $N$.
\end{defin}

Note that a stack may be related to a set of sequences that are in ${\mathbb Q}^{({\mathfrak c})}\setminus {\mathbb Z}^{({\mathfrak c})}$.

\subsection{Building stacks and homomorphisms}

Note that if $\xi \in \mathfrak{c} \setminus \{0\}$ then $\bigcup_{n \in \omega} \supp h_{\xi} \circ \sigma_{\xi}(n) = \bigcup_{n \in \omega} \supp h_{\xi}(n) \subseteq \xi$. From this, it is easy to check the following:

\begin{prop}\label{prop_E}
If  $F  \in [{\mathfrak c}]^{\leq \omega}$ then there exists $E \in [\mathfrak{c}]^{\omega}$ such that $F\subseteq E$ and if $\xi \in E \setminus \{0\}$ then $\bigcup_{n \in \omega} \supp h_{\xi} \circ \sigma_{\xi}(n) \subseteq E$.
\end{prop}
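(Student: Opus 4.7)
The plan is to obtain $E$ by a standard countable closure argument, using the observation (already stated in the paragraph preceding the proposition) that the map
\[ \xi \longmapsto S(\xi) := \bigcup_{n \in \omega} \supp h_\xi \circ \sigma_\xi(n) = \bigcup_{n \in \omega} \supp h_\xi(n) \]
sends every $\xi \in \mathfrak{c}\setminus\{0\}$ to a \emph{countable} subset of $\xi$ (countable because each $\supp h_\xi(n)$ is finite, and $\sigma_\xi$ is a permutation so it only reindexes). For convenience set $S(0) = \emptyset$.

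Next I would define by recursion on $n \in \omega$ an increasing sequence of countable subsets of $\mathfrak{c}$ by letting
\[ E_0 = F \cup \{0\}, \qquad E_{n+1} = E_n \cup \bigcup_{\xi \in E_n} S(\xi). \]
A trivial induction shows that each $E_n \in [\mathfrak{c}]^{\leq \omega}$, since $E_{n+1}$ is a countable union of countable sets. Finally put $E = \bigcup_{n \in \omega} E_n$, which is again countable and clearly contains $F$.

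To verify the closure property, let $\xi \in E \setminus \{0\}$. Then there is some $n \in \omega$ with $\xi \in E_n$, and by construction $S(\xi) \subseteq E_{n+1} \subseteq E$, i.e.\ $\bigcup_{n \in \omega} \supp h_\xi \circ \sigma_\xi(n) \subseteq E$, as required. If one wishes $E$ to be exactly of size $\omega$ rather than at most $\omega$, pad with countably many further ordinals. There is no real obstacle here: the only point to be careful about is that each individual $S(\xi)$ is countable (not merely contained in $\xi$, which would give the wrong bound when $\xi \geq \omega_1$), and this is immediate from $h_\xi(n) \in \mathbb{Z}^{(\mathfrak{c})}$ having finite support for every $n$.
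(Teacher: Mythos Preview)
Your proposal is correct; the paper does not actually prove this proposition, merely remarking that ``it is easy to check'' from the observation $\bigcup_{n\in\omega}\supp h_\xi\circ\sigma_\xi(n)=\bigcup_{n\in\omega}\supp h_\xi(n)\subseteq\xi$. The standard countable closure you wrote out is precisely the intended verification.
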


The proof of the next lemma can be found in \cite{tomita&watson}.

\begin{lem}\label{lem_intervalinhos}
Let ${\mathcal U} \in \omega^{*}$ be selective and $\{a_k : k \in \omega\} \in {\mathcal U}$ be a strictly increasing sequence such that $k < a_k$ for each $k \in \omega$. There exists $I \in [\omega]^{\omega}$ such that

\begin{enumerate}[(i)]

  \item $\{a_k : k \in I\} \in {\mathcal U}$ and

  \item $\{[k, a_k] : k \in I\}$ is a family of pairwise disjoint intervals of $\omega$.

\end{enumerate}
\end{lem}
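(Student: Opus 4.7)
The plan is to apply the Ramsey characterization of selectivity (condition (iii) of the earlier proposition) to a carefully chosen $2$-coloring of pairs, extracting $I$ from the resulting homogeneous set. Writing $A=\{a_k:k\in\omega\}\in\mathcal U$ and using that $k\mapsto a_k$ is a bijection $\omega\to A$ with inverse $\mathrm{ind}:A\to\omega$ satisfying $\mathrm{ind}(b)<b$ (because $k<a_k$), the key move is to put the partition on $[A]^2$, not on $[\omega]^2$.

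For $b<b'$ in $A$, I would color $\{b,b'\}$ with $P_0$ iff $\mathrm{ind}(b')>b$, and with $P_1$ otherwise (pairs outside $[A]^2$ may be colored arbitrarily, since a homogeneous $D\in\mathcal U$ can be intersected with $A\in\mathcal U$). A straightforward pigeonhole shows that the case $[D]^2\subseteq P_1$ is impossible: for any fixed $b_0\in D$, every larger $b'\in D$ would satisfy $b'=a_{\mathrm{ind}(b')}\leq a_{b_0}$, forcing $D$ to be finite. Hence $[D]^2\subseteq P_0$, and setting $I=\{\mathrm{ind}(b):b\in D\}$ delivers both conditions at once: enumerating $D=\{b_0<b_1<\cdots\}$ and writing $k_n=\mathrm{ind}(b_n)$, the homogeneity gives $k_{n+1}>b_n=a_{k_n}$, which is precisely the spacing that makes the intervals $[k_n,a_{k_n}]$ pairwise disjoint; meanwhile $\{a_k:k\in I\}=D\in\mathcal U$ by construction.

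The main obstacle is choosing the right partition. A first attempt --- coloring $\{k,k'\}\in[\omega]^2$ by whether $a_k<k'$ --- does yield a homogeneous set in $\mathcal U$ with the disjoint-interval property, but offers no control over whether $\{a_k:k\in D\}\in\mathcal U$. Shifting the partition onto $[A]^2$ and coloring via $\mathrm{ind}(b')$ versus $b$ arranges that the homogeneous $D$ \emph{equals} $\{a_k:k\in I\}$, so condition (i) holds for free.
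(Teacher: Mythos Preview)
Your argument is correct. The coloring on $[A]^2$ via $\mathrm{ind}(b')>b$ is exactly the right choice: the pigeonhole that rules out $P_1$-homogeneity is clean (any $b'\in D$ above a fixed $b_0$ would satisfy $b'=a_{\mathrm{ind}(b')}\le a_{b_0}$, bounding $D$), and for $P_0$-homogeneous $D$ one gets $k_m=\mathrm{ind}(b_m)>b_n=a_{k_n}$ for all $n<m$, which is precisely the disjointness of the intervals; meanwhile $\{a_k:k\in I\}=D\in\mathcal U$ by construction. Your remark that coloring $[\omega]^2$ by whether $a_k<k'$ gives disjoint intervals but loses control over $\{a_k:k\in I\}$ is also accurate and nicely motivates moving the partition to $[A]^2$.

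As for comparison: the paper does not give its own proof of this lemma, citing instead \cite{tomita&watson}. Your approach is entirely in the spirit of the paper's methods, however---it is the same Ramsey-for-pairs maneuver the paper carries out in the proof of Lemma~\ref{increasing.sequence}, adapted to the present statement.
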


The following lemma was proved in \cite[Lemma 7.1]{tomita2015}. The proof was divided through several lemmas and we add here a small observation $(\#)$ that follows from these proofs.
\begin{lem} \label{stack} Let $h_0, \ldots, h_{m-1}$ be sequences in ${\mathbb Z}^{({\mathfrak c})}$ and ${\mathcal U} \in \omega^*$ be a selective ultrafilter. Then there exists $A\in {\mathcal U}$ and a stack $\frac{1}{N} \cdot {\mathcal S}$ on $A$ such that
if the elements of the stack have a ${\mathcal U}$-limit in ${\mathbb Z}^{({\mathfrak c})}$ then $h_i$ has a ${\mathcal U}$-limit in ${\mathbb Z}^{({\mathfrak c})}$ for each $0\leq i<m$.

We will say in this case that the finite sequence $\{ h_0, \ldots , h_{m-1}\}$ is associated to $(\frac{1}{N} \cdot {\mathcal S}, A, {\mathcal U})$.

$(\#)$ If $\{[h_0]_{\mathcal U}, \ldots, [h_{m-1}]_{\mathcal U}\}$ is a ${\mathbb Q}$- linearly independent set and the group generated does not contain nonzero constant classes then each $h_i|_A$  is an integer combination of the stack $\frac{1}{N} \cdot {\mathcal S}$ on $A$. On the other hand, each element of the integer stack ${\mathcal S}$ is an integer combination of $\{h_0, \ldots, h_{m-1}\}$ restricted to $A$.

\end{lem}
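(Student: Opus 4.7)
The plan is to follow the construction of \cite[Lemma 7.1]{tomita2015}, which builds the stack by an iterative refinement of $\omega$ to a set $A \in {\mathcal U}$ using the $P$-point and selectivity properties of ${\mathcal U}$, and then to verify that the new observation $(\#)$ can be read off from that construction. First I would restrict attention to the countable set $E=\bigcup_{i<m}\bigcup_{n\in\omega}\supp h_i(n)$ of relevant coordinates. For each $\beta\in E$ and each $i<m$, the sequence $(h_i(n)(\beta):n\in\omega)$ is a sequence of integers; using the $P$-point property and Proposition 2.3(ii), I would pass to $A\in{\mathcal U}$ on which, simultaneously for every $i$ and every $\beta\in E$, the sequence $(h_i(n)(\beta):n\in A)$ is either eventually constant, eventually bounded but nonconstant, or grows monotonically to $\pm\infty$. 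The constant parts can be subtracted once we know the relevant constant classes; for this, I would first reduce to the case where $\{[h_0]_{\mathcal U},\dots,[h_{m-1}]_{\mathcal U}\}$ is ${\mathbb Q}$-linearly independent modulo the constants in $({\mathbb Z}^{({\mathfrak c})})^\omega/{\mathcal U}$, since if not we can replace some $h_i$ with a shorter combination without losing any ${\mathcal U}$-limit information.

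Next I would separate, on a further set in ${\mathcal U}$, the bounded-support coordinates from the growing ones. Using selectivity to choose one coordinate at a time, I would extract finitely many sequences $\mu_l\in{\mathfrak c}^A$ with pairwise disjoint ranges (hence satisfying (1)--(3)) such that $g_l(n)$ is the restriction of an appropriate ${\mathbb Q}$-combination of the $h_i(n)$ to the first $l+1$ of these coordinates, giving $|g_l(n)|\le M$ for a uniform $M$ (condition (4)). For the growing part I would organize coordinates into finitely many "levels" $\xi_0,\dots,\xi_{s-1}$ by iteratively choosing $\xi_i(n)$ to be the coordinate of fastest growth among those not already used at level $<i$; condition (10) is automatic from this choice. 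At each level $i$, the vectors of values $(h_i(n)(\xi_i(n)))$, together with the finitely many rationally-dependent combinations that live there, produce the functions $f_{i,j,k}$ grouped so that within a fixed $j$ the ratios of values tend to reals $\theta_{i,j,k}$ (condition (6)) and between different $j$'s the growth rates are genuinely different (conditions (8), (9)). The ${\mathbb Q}$-linear independence in (5) is arranged by discarding combinations whose ratio limits would be rational, folding them back into other $f_{i,j,k}$'s; this is the step that forces us to divide by a common denominator $N$ to obtain the $N$-th root $\frac{1}{N}\cdot{\mathcal S}$ of an integer stack.

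The main obstacle is juggling the simultaneous conditions (5)--(10) while keeping track of the denominators: each elimination step introduces rational coefficients, and a naive iteration gives different denominators at different stages; one must show that a single $N$ suffices and that the resulting $f_{i,j,k}$ and $g_l$ remain canonical linear combinations of the $h_i|_A$. Once this is done, observation $(\#)$ follows by bookkeeping. By construction every $f_{i,j,k}$ and every $g_l$ is an integer combination (with denominator $N$) of $\{h_0|_A,\dots,h_{m-1}|_A\}$, which is the second assertion of $(\#)$. For the first assertion, assuming $\{[h_i]_{\mathcal U}:i<m\}$ is ${\mathbb Q}$-linearly independent and its span contains no nonzero constant class, the reduction procedure in the previous paragraph loses no information: at each stage the combination we subtract off is exactly one of the stack elements, so inverting the construction expresses each $h_i|_A$ as an integer combination of the $f_{i,j,k}$'s and $g_l$'s. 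Finally, if the stack elements have ${\mathcal U}$-limits in ${\mathbb Z}^{({\mathfrak c})}$, then so do their integer combinations, hence each $h_i$ has a ${\mathcal U}$-limit, proving the main conclusion.
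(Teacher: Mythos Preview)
Your outline is essentially what the paper does: the paper does not re-prove the lemma at all but simply cites \cite[Lemma 7.1]{tomita2015} and remarks that $(\#)$ can be read off from that construction, and your sketch is a reasonable summary of how that construction proceeds. One small slip to fix in your bookkeeping for $(\#)$: the second assertion says the elements $f_{i,j,k},g_l$ of the \emph{integer} stack ${\mathcal S}$ are integer (not ``integer with denominator $N$'') combinations of the $h_i|_A$, while the first assertion says each $h_i|_A$ is an integer combination of the elements of $\frac{1}{N}\cdot{\mathcal S}$ (equivalently, $N\cdot h_i|_A$ is an integer combination of the $f_{i,j,k}$'s and $g_l$'s); this is exactly the matrix relation $N_p\,u_i=\sum_j {\mathcal M}^p_{i,j} v^p_j$ used later in the paper.
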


 Note that from $(\#)$, a ${\mathbb Q}$-linearly independent subset and the stack associated have the same number of elements and we will be using the change of basis matrix between them.

The homomorphisms are constructed first in a countable subgroup and extended. The construction of a homomorphism in a countable subgroup is achieved in $\omega$-steps where we define an arc approximation for the values the homomorphism will assume.

The existence of an integer as in the definition below follows from Kronecker's Lemma and was also used in \cite{tomita2015}.

\begin{defin} If $\{\theta_0, \ldots , \theta_{r-1}\}$ is a linearly independent subset of the ${\mathbb Q}$-vector space ${\mathbb R}$ and $\epsilon>0$ then $L(\theta_0 , \ldots , \theta_{r-1},\epsilon)$ is a positive integer $L$ such that $\{ (\theta_0 x +{\mathbb Z}, \ldots , \theta_{r-1} x +{\mathbb Z}):\, x \in I\}$ is $\epsilon$-dense in ${\mathbb T}^r$ in the usual Euclidean metric product topology, for any interval $I$ of length at least $L$.
\end{defin}

The following lemma from \cite[Lemma 8.3]{tomita2015}  is related to the construction of arcs  at a single level:

\begin{lem} \label{homomorphism.step} Let $\epsilon$, $\gamma$ and $\rho$ be positive reals, $N$ be a positive integer and $\psi$ be an arc function. Let ${\mathcal S}$ be an integer stack on $A \in [\omega]^{\omega}$ and $s,t, r_i,r_{i,j},M, f_{i,j,k}$, $ g_l$, $\xi_i$, $\mu_j$ and $\theta_{i,j,k}$ be as in Definition \ref{defin.stack}. Let $L$ be an integer greater or equal to $ \max \{ L(\theta_{i,j,0}, \ldots ,\theta_{i,j,r_{i,j}-1}, \frac{\epsilon}{24}):\, 0\leq i<s \text{ and } 0\leq j<r_i \}$ and $r=\max \{r_{i,j}:\, 0\leq i<s \text{ and } 0\leq j<r_i \}$.

Suppose that $n\in A$ is such that

\begin{enumerate}[(a)]

 \item  $\{ V_{i,j,k}:\, 0\leq i <s, 0\leq j <r_i \text{ and } 0\leq k <r_{i,j}\} \cup \{W_l:\, 0\leq l<t\}$ is a family of open arcs of length $\epsilon$;

 \item $\delta (\psi(\beta))\geq \epsilon$ for each $\beta \in \supp \psi$;

 \item $\epsilon  > 3 N \cdot \max  \{\|g_l(n)\|:\, 0\leq l <t\} \cup \bigcup \{\|f_{i,j,k}(n)\|:\, 0\leq i<s, 0\leq j <r_i, 0\leq k<r_{i,j}\} \cdot \rho$;

 \item $3  M N s \gamma <\epsilon$;

 \item  $| f_{i,r_i-1,0}(n)(\xi_i(n))| \cdot \gamma > 3  L$ for each $0\leq i<s$;

 \item $|f_{i,j-1,0}(n)(\xi_i(n))| \cdot \frac{\epsilon}{6\sqrt{r_{i,j}} \cdot|f_{i,j,0}(n)|} >3L$ for each $0\leq i <s$ and $0<j<r_i$;

  \item $\left| \theta_{i,j,k}-\frac{f_{i,j,k}(n)(\xi_i(n))}{f_{i,j,0}(n)(\xi_i(n))} \right|<\frac{\epsilon}{24\sqrt{r}L}$ for each $i<s$, $j<r_i$ and $k<r_{i,j}$ and

  \item $\supp \psi \cap \{ \mu_0(n), \ldots , \mu_{t-1}(n)\}=\emptyset$.

\end{enumerate}

   Then there exists an arc function $\phi$ such that

   \begin{enumerate}[(A)]

\item $N \phi(\beta)\subseteq N \overline{\phi(\beta)}\subseteq \psi(\beta)$ for each $\beta \in \supp \psi$;

\item $\sum_{\beta \in \supp g_l(n)} g_l(n)(\beta) \cdot \phi(\beta) \subseteq W_l$ for each $l <t$;

\item $\sum_{\beta \in \supp f_{i,j,k}(n)} f_{i,j,k}(n)(\beta) \cdot \phi(\beta) \subseteq V_{i,j,k}$ for each $i<s$, $j<r_i$ and $k <r_{i,j}$;

\item $\delta(\phi(\beta))=\rho$ for each $\beta \in \supp \phi$ and

\item $\supp \phi$ can be chosen to be any finite set containing \[\supp \psi \cup \bigcup_{0\leq i<s, 0\leq j <r_i, 0\leq k<r_{i,j}}\supp f_{i,j,k}(n) \cup \bigcup_{0\leq l <t} \supp g_l(n).\]

\end{enumerate}
\end{lem}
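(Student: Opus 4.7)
The plan is to construct $\phi$ by a multi-scale nested arc refinement, processing the coordinates in three stages over a support chosen as in (E). First, for Stage (I), I would assign $\phi(\beta)$ for each coordinate $\beta$ not of the form $\xi_i(n)$ or $\mu_l(n)$ to be an arbitrary arc of width $\rho$, respecting (A) whenever $\beta \in \supp \psi$ (possible because the preimage $(1/N)\psi(\beta)$ consists of $N$ disjoint arcs of length at least $\epsilon/N$ by (b), and $\rho < \epsilon/N$ by (c)). Condition (c) then bounds the cumulative contribution of these arcs to any target sum by diameter at most $\epsilon/3$.

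Stage (II) is the heart of the proof: it assigns $\phi(\xi_i(n))$ for $i = 0, 1, \ldots, s-1$ in forward order. The forward order is forced by condition (10) of Definition \ref{defin.stack}: for $i^* < i$ we have $|f_{i,j,k}(n)(\xi_{i^*}(n))| \leq M$, so previously fixed arcs $\phi(\xi_{i^*}(n))$ will perturb any sum at stage $i$ by at most $sMN\rho$, which is well below $\epsilon/3$ by (c). For each $i$, I would choose $\phi(\xi_i(n))$ by nested Kronecker refinement through the levels $j = r_i-1, r_i-2, \ldots, 0$, from smallest to largest coefficient magnitude. At the initial level $j = r_i-1$, the available range for $\phi(\xi_i(n))$ has width at least $\gamma$ (from (b), (c), (d)); condition (e) ensures that this range, rescaled by $|f_{i,j,0}(n)(\xi_i(n))|$, has length at least $3L$, so Kronecker's lemma (encoded in $L$) produces a sub-arc inside which $\sum_\beta f_{i,j,k}(n)(\beta)\phi(\beta) \in V_{i,j,k}$ for every $k < r_{i,j}$, with (6), (7) and (g) controlling the $\theta$-approximation error. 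I would then localize to a sub-arc of width $\epsilon/(6\sqrt{r_{i,j}}\cdot|f_{i,j,0}(n)|)$; condition (f) ensures that the next rescaling by $|f_{i,j-1,0}(n)(\xi_i(n))|$ still exceeds $3L$, so Kronecker applies again at level $j-1$. Iterating to $j=0$ and contracting the result to width exactly $\rho$ yields $\phi(\xi_i(n))$ satisfying (C) for all levels of index $i$.

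Stage (III) assigns $\phi(\mu_l(n))$ for $l = t-1, t-2, \ldots, 0$ in reverse order. By (2), $\mu_l(n) \notin \supp g_{l'}(n)$ whenever $l < l'$, so the choice of $\phi(\mu_l(n))$ does not perturb the $g_{l'}$-sums already fixed at earlier (reverse) steps. By (1), $g_l(n)(\mu_l(n))$ is a nonzero integer of absolute value at most $M$, so the map $\phi(\mu_l(n)) \mapsto g_l(n)(\mu_l(n))\phi(\mu_l(n))$ is an $M$-fold cover of $\mathbb{T}$; I would pick $\phi(\mu_l(n))$ of width $\rho$ so that $\sum_\beta g_l(n)(\beta)\phi(\beta) \in W_l$, using (c) and (d) to guarantee the necessary slack. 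Property (A) for $\mu_l(n)$ is vacuous by (h).

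The main obstacle will be the multi-scale bookkeeping in stage (II): conditions (c)--(g) must be tuned so that at each nested Kronecker refinement the current sub-arc is simultaneously (i) large enough, after rescaling by $|f_{i,j-1,0}(n)(\xi_i(n))|$, to admit the next Kronecker application (via (e) or (f)); and (ii) small enough that the sums already positioned at coarser levels remain inside their target arcs, which is precisely the monotone separation of scales granted by (7)--(9). The forward order on $i$ in (II) and the reverse order on $l$ in (III) are likewise forced by the support patterns recorded in (1)--(3) and (10), so the decoupling across indices is built into the stack data; the remaining work is a careful chain of inequalities inside each block $\{f_{i,j,k}(n):\, j < r_i,\, k < r_{i,j}\}$.
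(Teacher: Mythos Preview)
The paper does not prove this lemma here; it is quoted from \cite[Lemma 8.3]{tomita2015}. Your three-stage architecture and the nested Kronecker refinement over $j$ are the right ideas, and your reverse order on $l$ in Stage~(III) is correct for the reason you give, but the forward order on $i$ in Stage~(II) is backwards, and this breaks the argument. Condition~(10) says $|f_{i,j,k}(n)(\xi_{i^*}(n))|\le M$ only for $i^*<i$: the $f$ with the \emph{larger} first index has bounded coefficient at the $\xi$ with the \emph{smaller} index. In forward order, after you position $\sum_\beta f_{i,j,k}(n)(\beta)\phi(\beta)$ at step $i$, the later assignment of $\phi(\xi_{i'}(n))$ for some $i'>i$ shifts that sum by $|f_{i,j,k}(n)(\xi_{i'}(n))|$ times an arc width, and nothing bounds this coefficient. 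Your sentence ``previously fixed arcs $\phi(\xi_{i^*}(n))$ will perturb any sum at stage $i$ by at most $sMN\rho$'' is true but beside the point: the danger is \emph{future} assignments disturbing \emph{past} sums. The correct order is $i=s-1,\ldots,0$, starting from provisional width-$\gamma$ arcs at every $\xi_{i^*}(n)$; at step $i$ the still-unrefined $\xi_{i^*}(n)$ with $i^*<i$ then contribute width at most $sM\gamma<\epsilon/(3N)$ to the $f_{i,j,k}$-sum by (10) and (d), and subsequent refinement only shrinks this.

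A second gap has the same flavour: you run Stage~(II) while the $\mu_l(n)$ are unassigned, yet nothing in Definition~\ref{defin.stack} bounds $|f_{i,j,k}(n)(\mu_l(n))|$, so Stage~(III)'s free placement of $\phi(\mu_l(n))$ anywhere in $\mathbb{T}$ can undo what Stage~(II) built. Run Stage~(III) \emph{before} Stage~(II), with the $\xi_i(n)$ sitting at provisional width $\gamma$: since $|g_l(n)|\le M$ by (4), the $\xi$-contribution to each $g_l$-sum has width at most $sM\gamma<\epsilon/(3N)$ by (d), and the later Kronecker refinement of the $\xi$'s only shrinks it.
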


\section{Constructing homomorphisms on countable subgroups: The proof of Lemma \ref{homo.countable.piece}} \label{section.main.result}

We will now construct a partial homomorphism. In \cite{tomita2015}, a finite family of sequences is associated to a stack and the stack received an accumulation point that was used to produce an accumulation point for the original sequence.
The approach here will be different since the same ultrafilter will be used for a finite set of sequences which will be increasing at each stage.
Because an independent subset of a free abelian group may not be extended to a basis, the stacks used in each stage may not be related.

The stack of the stage will solve some arc equations that will be used at this stage to keep the promise of the ${\mathcal U}$-limits for the original sequences. That is, at each stage we move to stacks to solve equations and then return with a solution to the original finite sequence of the stage to which the stack is related to.

\smallskip
{\bf The stacks at each stage.}
Let $ {\mathcal S}_p$ be an integer stack on $A_p \in {\mathcal U}$ and $N_p$ be a positive integer such that $\left( \frac{1}{N_p} \cdot {\mathcal S}_p , A_p, {\mathcal U} \right)$ is associated to $\{u_0, \ldots , u_{p-1}\}$  for each $p\in \omega$.

 For each integer stack ${\mathcal S}_p$, we will denote its parts by $s^p$, $t^p$, $\{r^p_i :\, i<s^p\}$, $\{r^p_{i,j}:\, i<s_p \text{ and } j< r^p_i \}$, $\{ f^p_{i,j,k}:\, i<s_p , j< r^p_i \text{ and } k<r^p_{i,j} \}$, $\{g^p_l:\, l<t^p\}$, $\{ \xi^p_i:\, i<s^p\}$, $\{\mu^p_i:\, i<t^p\}$ and $\{ \theta^p_{i,j,k}:\, 0\leq i < s^p ,$ $ 0\leq j <r_i^p \text{ and } k<r^p_{i,j}\}$.

 We will re-enumerate $\{ f^p_{i,j,k}:\, i<s_p , j< r^p_i \text{ and } k<r^p_{i,j} \} \cup \{g^p_l:\, l<t^p\}$ as $\{v^p_0, \ldots , v^p_{p-1}\}$. Let ${\mathcal M}^p$ be the $p \times p$ matrix of integer numbers such that $N_p.u_i(n)= \sum_{j<p}{\mathcal M}^p_{i,j}.v^p_j(n)$
for each $p\in \omega$, $n \in A_p$ and $i<p$. By $(\#)$ in Lemma \ref{stack}, each $v^p_j$
is an integer combination of the $u_i$'s, therefore the inverse matrix of $\frac{1}{N_p}.{\mathcal M}^p$, which we will denote by ${\mathcal N}^p$, has
integer entries.

We have to solve arc equations for the $u_i$'s. We will use the matrices to transform these equations in arc equations using $v^p_j$'s.  We can solve these new arc equations using the fact that the $v^p_j$'s  form a stack.  We use again the matrices to produce a solution for the original arc equations.

\smallskip

{\bf The element of ${\mathcal U}$ to make the induction.}

We will denote by $L^p(\delta)$ an integer greater or equal to the maximum of the set $\{ L(\theta^p_{i,j,0}, \ldots , \theta^p_{i,j,r^p_{i,j}-1},\delta):\, i<s^p \text{ and } j<r^p_i\}$.

 Let $K_{p} = \{ \|u_i(m)\|:\, m \leq p \text{ and } i \leq p\}\cup \{ \|f^q_{i,j,k}(m)\|:\, q\leq p, m \leq p, 0\leq i <s^q, 0\leq j< r^q_i \text{ and } 0\leq k <r^q_{i,j}\} \cup \{ \|g^q_l(m)\|:\, q\leq p, m \leq p \text{ and } 0\leq l <t^q\} \cup \{ \|c\|\}$.

 Enumerate $T$ as $\{ e_n:\, n\in \omega \} $ and define

   $E_p= \{ \supp u_q(m):\, q\leq p \text{ and } m\leq p\} \cup  \{ \supp c_q(m):\, q\leq p \}  \cup \{ \supp c\} \cup \{ e_n:\, n\leq p\}$ and

    $$\epsilon_p=\frac{4^{-p-1}}{\left(\max K_p.\prod_{q<p}N_q.(\max \{\sum_{i,j<q}|{\mathcal M}^q_{i,j}|:\, q <p\})\right)^p}, $$
    \smallskip

    where $\prod_{q<0}N_q . (\max \{\sum_{i,j<q}|{\mathcal M}^q_{i,j}|:\, q <0\})$ is set to be $1$.

    \smallskip

 Let $B_p$ be the set of $a \in A_p$ such that

 \begin{enumerate}[(I)]

  \item $p< a$;

  \item $3.L^p(\frac{\epsilon_p}{24.(\sum_{i,j<p}|{\mathcal M}^p_{i,j}|)}) < \frac{\epsilon_p}{6.M_p.N_p.s^p}.\min \{ |f^p_{i,j,0}(a)(\xi^p_i(a))|: 0\leq i <s^p \text{ and } 0\leq j< r^p_j \}$;

  \item $ \{\mu^p_0(a), \ldots , \mu^p_{t^p-1}(a)\} \cap E_p =\emptyset $;

  \item $|f^p_{i,j-1,0}(a)(\xi_i(a))|.\frac{\epsilon_p}{6.\sqrt{r^p}.|f^p_{i,j,0}(a)|.(\sum_{i,j<p}|{\mathcal M}^p_{i,j}|)} $ is greater than

 \noindent $3.L^p(\frac{\epsilon_p}{24.(\sum_{i,j<p}|{\mathcal M}^p_{i,j}|)})$ for each $0\leq i <s^p$ and $0<j<r^p_i$;

  \item $|\theta^p_{i,j,k}-\frac{f^p_{i,j,k}(a)(\xi^p_i(a))}{f^p_{i,j,0}(a)(\xi^p_i(a))}|<\frac{1}{24.\sqrt{r^p}.L^p(\frac{\epsilon_p}{24.(\sum_{i,j<p}|{\mathcal M}^p_{i,j}|)})}$ for each $i<s^p$, $j<r^p_i$ and $k<r^p_{i,j}$.

\end{enumerate}
\medskip

By the definition of the stack, it follows that $A_p \setminus B_p$ is a finite set, therefore $B_p\in {\mathcal U}$ for each $n\in \omega$.

 By Lemma \ref{increasing.sequence}, there exists an increasing sequence $\{a_{p}:\, p\in \omega\}\in {\mathcal U}$ such that $a_{p} \in B_{p}$.

 It follows from Lemma \ref{lem_intervalinhos} that there exists $I\in [\omega]^\omega$ such that:

\begin{enumerate}[(a)]

 \item $\{a_{n}:\, n \in I\}\in {\mathcal U}$ and

 \item $ \{ [n,a_{n}]:\,  n\in I\}$ are pairwise disjoint.

\end{enumerate}

Enumerate $I$ above in increasing order as $\{n_b:\, b \in \omega\}$. Then,

\begin{enumerate}[(a')]
	
	\item $\{a_{n_b}:\, n \in \omega\}\in {\mathcal U}$ and
	
	\item $ \{ [n_b,a_{n_b}]:\,  n\in \omega\}$ are pairwise disjoint.
	
\end{enumerate}

\smallskip
{\bf The construction of the arc functions.}

 We will define an arc function $\phi_b$ for each $b < \omega$ satisfying the following properties:

\begin{enumerate}[(i)]

 \item $\supp \phi_b=E_{n_b}$;

 \item the diameter of $\phi_b(\beta)$ is $\epsilon_{n_b}$ for each $\beta \in \supp \phi_b$;

 \item $\max K_{n_b}.\epsilon_{n_b} <\frac{1}{4^b}$;

 \item $\phi_{b+1}(\beta) \subseteq \overline {\phi_{b+1}(\beta)}\subseteq \phi_b(\beta)$ for each $b \in \omega$ and $\beta \in \supp \phi_b$;

 \item $0 \notin \overline{ \sum_{\beta \in \supp c} c(\beta).\phi_0 (\beta) }$ and

 \item the arc $\sum_{\beta \in \supp u_{n_b}(a_{n_b})} u_{n_b}(a_{n_b})(\beta).\phi_{b+1}(\beta) $ is contained in the arc

 \noindent
 $\sum_{\beta \in \supp c_{n_b}} c_{n_b}(\beta).\phi_b(\beta)$ for each $b \in \omega$.

\end{enumerate}

Condition $(iii)$ follows from the definition of $\epsilon_p$. We include it here since it is used to argue about the diameter of the sums in $(vi)$.

 We can fix  an arc $\phi_{0}(\beta)$ of length $\epsilon_{n_0}$  for each $\beta \in E_{n_0}$ such that $0\not \in \overline{\sum_{\beta \in \supp c} c(\beta).\phi_0 (\beta)}$. This is possible because  $\epsilon_{n_0}\leq \frac{1}{4.\|c\|}$. Conditions $(i)$,  $(ii)$ and $(v)$ are satisfied. Conditions $(iv)$ and $(vi)$  are trivially satisfied.

 Suppose that $\phi_k$ is defined for every $k \leq b$ satisfying conditions $(i)$-$(vi)$ above. We will define $\phi_{b+1}$.

We will apply Lemma \ref{homomorphism.step} using

$\psi = \phi_b$,

 $\epsilon =\frac{\epsilon_{n_b}}{\sum_{i,j<n_b}|{\mathcal M}^{n_b}_{i,j}|}$,

$\gamma=\frac{\epsilon_{n_b}}{N_{n_b}.6.M_{n_b}.\sqrt{s^{n_b}}.\sum_{i,j<p}|{\mathcal M}^{n_b}_{i,j}|}=
\frac{\epsilon}{N_{n_b}.6.M_{n_b}.s^{n_b}}$,

$\rho=\frac{\epsilon_{n_{b+1}}}{N_{n_b}}$ and

 $N=N_{n_b}$.

  Fix $x_\mu \in {\mathbb R}$ such that $x_\mu +{\mathbb Z}$ is the center of the
 arc $\phi_b(\mu)$ for each $\mu \in E_{n_b}$.


 Given the open arc $U_i=\sum_{\beta \in \supp c_i} c_i(\beta).\phi_b(\beta)$ for each $i<{n_b}$,
let $y_i= \sum_{\beta \in \supp c_i} c_i(\beta).x_\beta$. Notice that $y_i+\mathbb Z$ is the center of the arc $U_i$.

  Let $z_j= \sum_{i<n_b} {\mathcal N}^{n_b}_{j,i}.\frac{y_i}{N_{n_b}}$.
Since ${\mathcal N}^{n_b}$ is an integer matrix, it follows that $z_j+{\mathbb Z}= \sum_{i<n_b} {\mathcal N}^{n_b}_{j,i}.(\frac{y_i}{N_{n_b}}+{\mathbb Z})$ for each $j<n_b$.

Let $R_j$ be an arc centered in $z_j+{\mathbb Z}$ whose length is $\epsilon$. Then the arc $\sum_{j<n_b}{\mathcal M}^{n_b}_{i,j}.R_j \subseteq U_i$ for each $i<n_b$.

 Set ${\mathcal S}={\mathcal S}^{n_b}$ and $L=L^{n_b}(\frac{\epsilon}{24})$.

 Set $ V_{i,j,k}=R_d $ if $v^{n_b}_d =f^{n_b}_{i,j,k}$  for each $ 0\leq i <s^{n_b}, 0\leq j <r^{n_b}_i \text{ and } 0\leq k <r^{n_b}_{i,j}\} $ and  $W_l=R_d$ for each $ 0\leq l<t^{n_b}$ if $v^{n_b}_d=g_l^{n_b}$.

 Applying Lemma \ref{homomorphism.step}, there exists an arc function $\phi^*$ such that

\begin{enumerate}[(A)]

\item $N_{n_b}.\phi^* (\beta)\subseteq  N_{n_b}.\overline {\phi^* (\beta)}\subseteq \phi_b(\beta)$ for each $\beta \in E_{n_b}$;

\item $\sum_{\beta \in \supp g^{n_b}_l(a_{n_b})} g^{n_b}_l(a_{n_b})(\beta).\phi ^*(\beta) \subseteq W_l$ for each $l <t^{n_b}$;

\item $\sum_{\beta \in \supp f^{n_b}_{i,j,k}(a_{n_b})} f^{n_b}_{i,j,k}(a_{n_b})(\beta).\phi^*(\beta) \subseteq V_{i,j,k}$ for each $i<s^{n_b}$, $j<r^{n_b}_i$ and $k <r^{n_b}_{i,j}$;

\item $\delta(\phi^* (\beta))=\frac{\epsilon_{b+1}}{N_{n_b}}$ for each $\beta \in \supp \phi^*$ and

\item $\supp \phi^*  = E_{n_{b+1}}$.

\end{enumerate}

Set $\phi_{n_{b+1}}$ such that $\phi_{n_{b+1}}(\beta)=N_{n_b}.\phi^*(\beta)$ for each $\beta \in \supp \phi_{n_{b+1}}=\supp \phi_{n_b}$. Then conditions $(i)$,  $(ii)$ and $(iv)$ are satisfied. Conditions $(iii)$ and $(v)$ are already satisfied at this stage. Thus, it remains to show only condition $(vi)$.
\smallskip

Conditions $(B)$ and $(C)$ imply that

$\sum_{\beta \in \supp v^{n_b}_j(a_{n_b})} v^{n_b}_j(a_{n_b})(\beta).\phi^* (\beta) \subseteq R_j$ for each $j<n_b$.

\smallskip

The set $X=\sum_{j<n_b} [ {\mathcal M}^{n_b}_{i,j}. \sum_{\beta \in \supp v^{n_b}_j(a_{n_b})}v^{n_b}_j(a_{n_b})(\beta).\phi^* (\beta)]$ is an arc contained in $\sum_{j<n_b} {\mathcal M}^{n_b}_{i,j} .R_j \subseteq U_i $.

On the other hand, the set $X$ above is

$\sum_{j<n_b} [  \sum_{\beta \in \supp v^{n_b}_j(a_{n_b})}{\mathcal M}^{n_b}_{i,j}.v^{n_b}_j(a_{n_b})(\beta).\phi^* (\beta)] $

$ =\sum_{j<n_b} [  \sum_{\beta \in \supp \phi^*}{\mathcal M}^{n_b}_{i,j}.v^{n_b}_j(a_{n_b})(\beta).\phi^* (\beta)]$

$=  \sum_{\beta \in \supp \phi^*}[\sum_{j<n_b}{\mathcal M}^{n_b}_{i,j}.v^{n_b}_j(a_{n_b})(\beta).\phi^* (\beta)]$.

Fix any $\beta \in \supp \phi^*$. Then $\sum_{j<n_b} {\mathcal M}^{n_b}_{i,j}.v^{n_b}_j(a_{n_b})(\beta).\phi^* (\beta)$

$=N_{n_b}.u_i(a_{n_b})(\beta).\phi^*(\beta)$. Therefore

\smallskip

$X=\sum_{\beta \in \supp \phi^*}[u_i(a_{n_b})(\beta).N_{n_b}.\phi^*(\beta)]$

$=\sum_{\beta \in \supp u_i(a_{n_b})}[u_i(a_{n_b})(\beta).N_{n_b}.\phi^*(\beta)]$

$=\sum_{\beta \in \supp u_i(a_{n_b})}[u_i(a_{n_b})(\beta).\phi_{n_{b+1}}(\beta)]$.

\smallskip

Hence

 (*)  $\sum_{\beta \in \supp u_i(a_{n_b})}[u_i(a_{n_b}(\beta)).\phi_{n_{b+1}}(\beta)]\subseteq U_i$.

\noindent holds for each $i<n_b$.

 By $(*)$,  $(vi)$ is satisfied since $\sum_{\beta \in \supp c_i} c_i(\beta).\phi_{b}(\beta)=U_i$.

\smallskip

{\bf The homomorphism.} Define $\Lambda(\chi_\beta)$ as the unique point in the set $\bigcap \{\phi_{k}(\beta):\, k\in \omega\}$ and extend $\Lambda$ to a homomorphism in ${\mathbb Z}^{(T)}$.

We have to check that $\Lambda$ is as required.

To verify condition $1)$, note that $\Lambda (c)$ is an element of

\noindent
 $\overline{ \sum_{\beta \in \supp c} c(\beta) . \phi_0(\beta)}$. By property $(v)$ it follows that

\noindent
  $0 \notin  \overline{ \sum_{\beta \in \supp c} c(\beta) . \phi_0(\beta)}$. Therefore $\Lambda (c)\neq 0$ and condition $1)$ holds.

We check now condition 2. Note that $\Lambda (c_i) $ is an element of

\noindent
$\sum_{\beta \in \supp c_i}c_i(\beta).\phi_{b}(\beta)$ and
$\Lambda (  u_i(a_{n_b}))$ is an element of

\noindent
$\sum_{\beta \in \supp  u_i(a_{n_b})} u_i(a_{n_b})(\beta) \phi_{b+1}(\beta)$.

The second arc is a subset of the first by condition $(iv)$,
therefore, the distance between these points are not greater than

  $\delta(\sum_{\beta \in \supp c_i}c_i(\beta).\phi_{b}(\beta))=$ $\epsilon_b\|c_i\|$.

  By the definition of $\epsilon_p$, it follows that the distance of the points is smaller than  $ \frac{\|c_i\|}{4^{n_b}}$.

Hence, the sequence $\{  \Lambda ( u_i(a_{n_b})):\, b \in \omega\}$ converges to $\Lambda ( c_i)$.
It follows from $\{a_{n_b}:\, b \in \omega\} \in {\mathcal U}$ that ${\mathcal U }$-$\lim (\Lambda ( u_i(k)):\, k\in \omega)=\Lambda ( c_i)$.

\section{Some questions and comments}

The use of selective ultrafilters for the construction of special countably compact groups started in \cite{garcia-ferreira&tomita&watson} and \cite{tomita&watson} for groups of order $2$.  Later this technique was further improved for products  to answer Comfort's question on countable compactness of powers  \cite{tomita3}.

In \cite{madariaga-garcia&tomita} the technique was adapted to free Abelian groups and some non-torsion groups \cite{boero&tomita1}. Comparing the torsion and non-torsion cases in these works, they do seem quite different. We believe that with the use of going back and forth with stacks at each stage, the framework for free Abelian groups and torsion groups seem closer related.

A single selective ultrafilter was used in \cite{castro-pereira&tomita2010}  to classify consistently all the torsion groups that admit a countable compact group topology and it may be possible to further develop the technique presented here to deal with larger free Abelian groups.

Some natural questions related to this work that remain open are:

\begin{question} 
\begin{enumerate}[(a)]

  \item Is there a Wallace semigroup in a model without selective ultrafilters?

  \item Is it consistent that there exists a Wallace semigroup whose finite powers are countably compact?

\end{enumerate}
\end{question}

\begin{question}

\begin{enumerate}[(a)]

  \item Is it possible to construct a countably compact group topology on some free Abelian group of cardinality strictly greater than ${\mathfrak c}$ using a single selective ultrafilter?

  \item Is there a countably compact group topology on a free Abelian group in a model without selective ultrafilters?

  \item The existence of a selective ultrafilter implies the existence of a group topology on the free Abelian group of cardinality ${\mathfrak c}$  whose every finite power is countably compact?
  
\end{enumerate}
\end{question}

In \cite{tkachenko&yaschenko}, it was shown that ${\mathbb R}$ (the direct sum of ${\mathfrak c}$ copies of ${\mathbb Q}$) admits a countably compact group topology without non-trivial convergent sequences assuming Martin's Axiom.

This was improved to the existence of ${\mathfrak c}$ incomparable selective ultrafilters in \cite{boero&tomita1}.

\begin{question}
Is there a countably compact group topology on ${\mathbb R}$ without non-trivial convergent sequences from a selective ultrafilter?
\end{question}

In \cite{castro-pereira&tomita2010}, the authors showed that there are arbitrarily large countably compact groups without non-trivial convergent sequences using a single selective ultrafilter. For free Abelian groups, the authors of \cite{madariaga-garcia&tomita} showed that the existence of $\kappa^\omega\geq {\mathfrak c}$ incomparable selective ultrafilters implies the existence of a countably compact group topology on the free Abelian group of cardinality $\kappa^\omega$, which limitates the construction to $2^{\mathfrak  c}$.

\begin{question}
Is it consistent that there exists a countably compact group topology on some free Abelian group of cardinality strictly greater than $2^{\mathfrak c}$?
\end{question}

\section*{Acknowledgements}

The first author has received financial support from FAPESP (Brazil), ``Bolsa de p\'os-doutorado 2010/19272-2".

The third author has received financial support from CNPq (Brazil), ``Bolsa de Produtividade em Pesquisa 305612/2010-7"; FAPESP, ``Aux\'\i lio regular de pesquisa 2012/01490-9"; CNPq, ``Bolsa de produtividade em pesquisa 307130/2013-4"; CNPq, ``Projeto universal 483734/2013-6", when the text was completely rewritten. Final revision under support from FAPESP 2016/26216-8.

The authors would like to thank Vinicius de Oliveira Rodrigues for revising and suggesting improvements in the last version of this work prior to submission.

\bibliographystyle{amsplain}
\bibliography{gruposenumeravelmentecompactos}

\end{document}